\numberwithin{equation}{section}
\let\g=\gamma
\let\wt=\widetilde
\def\R{\mathbb R}
\def\Z{\mathbb Z}
\newcommand{\beq}{\begin{equation}}
\newcommand{\eeq}{\end{equation}}
\newcommand{\ben}{\begin{eqnarray}}
\newcommand{\een}{\end{eqnarray}}
\newcommand{\beno}{\begin{eqnarray*}}
\newcommand{\eeno}{\end{eqnarray*}}
\newtheorem{theorem}{Theorem}[section]
\newtheorem{lemma}[theorem]{Lemma}
\newtheorem{remark}[theorem]{Remark}
\newtheorem{Theorem}{Theorem}[section]
\newtheorem{Proposition}[Theorem]{Proposition}
\def\AAA{1500}\def\AAB{200}\def\AAC{270}\def\ABB{100}\def\ABC{230}\def\AAD{10^4}\def\ABD{8000}\def\ACD{3000}
\begin{document}
\title[Stability of the 1D screened Vlasov Poisson]
{Nonlinear stability of the one dimensional screened Vlasov
Poisson equation}
\author{Dongyi Wei}
\address{School of Mathematical Sciences, Peking University, Beijing 100871,  China}
\email{jnwdyi@pku.edu.cn}
\maketitle
\begin{abstract}
We study the asymptotic behavior of small data solutions to the screened Vlasov
Poisson(i.e. Vlasov-Yukawa) equation on $\R\times\R$ near vacuum. We show that for initial data small in Gevrey-2 regularity,  the 
derivative of the density of order $n$ decays like $(t+1)^{-n-1}$.
\end{abstract}
\section{Introduction}
In this paper, we study the following screened Vlasov-Poisson system (also called the Vlasov-Yukawa system, see \cite{Y}) for a particle density function $f:\R^d\times\R^d\times\R\to\R$:
\begin{align}
&\partial_t f+v\cdot\nabla_x f-q\nabla_x\phi\cdot\nabla_v f=0,\quad
(1-\Delta)\phi(x,t)=\rho(x,t)=\int_{\R^d}f(x,v,t)\mathrm{d}v.
\label{eq:VY}
\end{align}Here $q=\pm1$ corresponds to an attractive or repulsive force. Compared to the classical Vlasov-Poisson system, the standard Coulomb 
interaction kernel is replaced by screened interactions. For the case $d\geq2$, the asymptotic stability of \eqref{eq:VY} was proved in 
\cite{CHL,D,HRV}. The case $d=1$ is much more complicated, the asymptotic stability of \eqref{eq:VY} remains open, 
only the long-time stability of initially small and analytic solutions is known (see \cite{HRV}). 
In this paper, we focus on the case $d=1$ and solve this open problem. The main result is as follows.
\begin{theorem}\label{thm1}
Let $d=1$, if the initial data $ f_0(x,v)$ is smooth and satisfies \begin{align}\label{f03}f_0\in W^{4,1}(\R\times\R),\quad \| (\partial_x+\partial_v)^{n+1}f_0\|_{L_{x,v}^1}\leq (n!)^2/10^4,\quad \forall\ n\in\Z,\ n\geq0,\end{align} then the solution of \eqref{eq:VY} with initial data $ f(x,v,0)=f_0(x,v)$ is global in time and the density $\rho$ satisfies the following stability estimates:
\begin{align}\label{rho3}
|\partial_x^n\rho(x,t)|\le 3^n(n!)^2(t+1)^{-n-1}/10^3,\quad \forall\ t\geq0,\ x\in\R,\ n\in\Z,\ n\geq0.
\end{align}
\end{theorem}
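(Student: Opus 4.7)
The strategy is to pass to profile coordinates $g(x,v,t):=f(x+tv,v,t)$, which eliminate the free streaming and reduce the system to
\[
\partial_t g = qE(x+tv,t)\bigl(\partial_v g - t\partial_x g\bigr),\qquad \rho(y,t)=\int g(y-tv,v,t)\,dv,
\]
with $E=\partial_x(1-\partial_x^2)^{-1}\rho$. The operator $L:=\partial_x+\partial_v$ is what \eqref{f03} controls to Gevrey-$2$ order; using $\int\partial_v h\,dv=0$ one verifies inductively the clean density representation
\[
\partial_y^n\rho(y,t)=\int L^n f(y,v,t)\,dv=(1+t)^{-n}\int L^n g(y-tv,v,t)\,dv.
\]
Thus \eqref{rho3} will follow from a uniform Gevrey-$2$ bound on $L^{n+1}g$: for $t\ge 1$ the change of variable $u=y-tv$ (Jacobian $1/t$) combined with a one-dimensional Sobolev-type estimate in $v$ supplies the remaining $(1+t)^{-1}$ factor, while for bounded $t$ the target exponent is automatic.

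\textbf{Core bootstrap.} The main work is a continuity argument on
\[
\cN_n(t):=\|L^{n+1}g(t)\|_{L^1_{x,v}}\le (n!)^2/\AAD,\qquad n\ge 0,\ t\ge 0,
\]
coupled with \eqref{rho3} as hypothesis. Since $L$ commutes with $\partial_x$ and $\partial_v$, applying $L^{n+1}$ to the profile equation gives, by Leibniz and the identity $L^j[E(x+tv,t)]=(1+t)^j(\partial_x^j E)(x+tv,t)$,
\[
\partial_t L^{n+1}g=q\sum_{j=0}^{n+1}\binom{n+1}{j}(1+t)^j(\partial_x^j E)(x+tv,t)(\partial_v-t\partial_x)L^{n+1-j}g.
\]
The $j=0$ piece absorbs into the left-hand side as a transport operator along the divergence-free vector field $(qtE(x+tv),-qE(x+tv))$ in $(x,v)$-space (preserving $\|\cdot\|_{L^1_{x,v}}$), so Duhamel for $\cN_n$ is driven by the sum over $j\ge 1$. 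Yukawa smoothing $\|\partial_x^j E(s)\|_{L^\infty}\lesssim\|\partial_x^{j-1}\rho(s)\|_{L^\infty}$ combined with \eqref{rho3} and the Gevrey-$2$ combinatorial identity $\binom{n+1}{j}^2(j!)^2((n+1-j)!)^2=((n+1)!)^2$ re-sums the Leibniz expansion, producing the constant $3^n/10^3$ in \eqref{rho3} from the geometric series in $j$.

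\textbf{Main obstacle.} The genuinely delicate point is the one-dimensional borderline: even after the $j=0$ absorption, the $j=1$ contribution features the product $(1+s)\|\partial_x E(s)\|_{L^\infty}\cdot\|(\partial_v-s\partial_x)L^n g(s)\|_{L^1}$, whose first factor is only $O(1)$ uniformly, while the second factor $(\partial_v-s\partial_x)L^n g=-sL^{n+1}g+(1+s)\partial_v L^n g$ grows linearly in $s$, so a naive Duhamel integral diverges. Closing the bootstrap will require a further structural decomposition — iterated absorption into modified transport operators, or integration by parts in time against the nonlinear characteristic flow — together with an auxiliary estimate on $\|\partial_v L^n g\|_{L^1}$ propagated jointly in the continuity argument. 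The combinatorial bookkeeping uses exactly the factorial-squared Gevrey-$2$ budget and is the mechanism by which Gevrey-$2$ emerges as the critical regularity in one space dimension: any weaker class would be overwhelmed by the plasma-echo cascade generated at this borderline.
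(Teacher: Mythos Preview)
Your setup is sound: the profile variable $g$, the commutation identity $\partial_y^n\rho=(1+t)^{-n}\int L^n g(y-tv,v,t)\,dv$, and the Leibniz expansion of $L^{n+1}$ applied to the nonlinearity are all correct, and you have put your finger on exactly the obstruction that makes $d=1$ hard. But the proposal stops at that obstruction rather than resolving it. In your own words, the $j=1$ Duhamel contribution ``diverges'' and closing the bootstrap ``will require a further structural decomposition''; the candidates you list (iterated absorption, integration by parts in time, a jointly propagated bound on $\|\partial_v L^n g\|_{L^1}$) are not carried out, and it is not at all clear that any of them closes with a \emph{time-independent} Gevrey-$2$ budget $\|L^{n+1}g\|_{L^1}\le (n!)^2/10^4$. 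Indeed, the $j=1$ term yields a forcing of size $O(1)\cdot\|L^{n+1}g\|_{L^1}$ after your rewriting, so Gr\"onwall in $L^1$ only gives exponential-in-$t$ growth of $\cN_n$, which destroys the bootstrap. This is a genuine gap, not a technicality.

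The paper takes a different route that sidesteps the divergent Duhamel entirely. Two ingredients are decisive and absent from your scheme. First, the density is represented not through the profile $g$ but through a \emph{boundary value problem} for the characteristics: one fixes the terminal position $X(t)=x$ and the asymptotic label $X(0)-V(0)=x_0$, writes $\rho_*(x,t)=\int \tilde f_0(x_0,V(0))\,\partial_{x_0}V(t)\,dx_0$, and estimates $\partial_x^n X$ and $\partial_x^n\partial_{x_0}X$ by comparing the linearized characteristic ODE $y''=h(s)y$ with an explicit supersolution $\gamma(s)=0.01\ln(s+1)+0.99s+1$. This converts the time integrals into two-point Green's function bounds (Lemma~\ref{lem1.3}) and avoids the loss you see in the transport Duhamel. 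Second, and this is what replaces your missing ``auxiliary estimate'', the Gevrey-$2$ budget is allowed to \emph{grow slowly in time} through weights $\varphi_n(t)=\exp\bigl((n-2)\sqrt t/(n+\sqrt t)\bigr)$: the bootstrap hypothesis is $|\partial_x^n\rho|\le \varphi_n(t)(n!)^2/(1500\,\gamma(t)^{n+1})$, and the growth of $\varphi_n$ from $1$ to $e^{n-2}$ is exactly calibrated (Lemma~\ref{lem1.2a}) so that the borderline integral $\int_0^t\min\{\varphi_{n-1}(s),\varphi_{n+1}(s)n^4/\gamma(s)^2\}\frac{t-s}{\gamma(t)}\,ds$ closes back to $C n^2\varphi_n(t)$. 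Without a device of this kind --- a time-dependent relaxation of the Gevrey radius --- your static bootstrap $\cN_n(t)\le (n!)^2/10^4$ cannot absorb the $O(1)$ forcing, and that is why your argument stalls.
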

Let us give some remarks.
\begin{itemize}

\item By time reversibility, if $\| (\partial_x-\partial_v)^{n+1}f_0\|_{L_{x,v}^1}\leq (n!)^2/10^4 $, then we can prove the decay estimate for $t\to-\infty$.

\item The initial condition can be stated as the smallness in a Gevrey-2 norm (see for example the definition in (A.14) in \cite{BM}).

\item Similar to \cite{IPWW}, we can prove the modified scattering.

\end{itemize}
Let $\tilde{f}_0(x,v)=f_0(x+v,v)$, then ${f}_0(x,v)=\tilde{f}_0(x-v,v)$, 
$\|\partial_v^{n+1}\tilde{f}_0\|_{L_{x,v}^1}=\|(\partial_x+\partial_v)^{n+1}f_0\|_{L_{x,v}^1}$.

For small initial data the dynamics of the Vlasov-Poisson system might be
 expected to be dominated by the free streaming part of the equation: $\partial_t f+v\cdot\nabla_x f=0 $.  In that case, integration along characteristics yields 
$f(x,v,t)=f_0(x-vt,v)=\tilde{f}_0(x-v(t+1),v) $, thus 
\begin{align*}
&\rho(x,t)=\int_{\R}f(x,v,t)\mathrm{d}v=\int_{\R}\tilde{f}_0(x-v(t+1),v)\mathrm{d}v=
\frac{1}{t+1}\int_{\R}\tilde{f}_0\left(x_0,\frac{x-x_0}{t+1}\right)\mathrm{d}x_0,\\
&\partial_x^n\rho(x,t)=\frac{1}{(t+1)^{n+1}}\int_{\R}(\partial_v^n\tilde{f}_0)\left(x_0,\frac{x-x_0}{t+1}\right)\mathrm{d}x_0.
\end{align*}Here we assume $d=1$. For $z\in\R$ we have 
\begin{align}\label{L1}
&|\partial_v^n\tilde{f}_0(x,z)|=\left|\int_{-\infty}^z\partial_v^{n+1}\tilde{f}_0(x,v)\mathrm{d}v\right|
\leq\int_{-\infty}^z|\partial_v^{n+1}\tilde{f}_0(x,v)|\mathrm{d}v\leq\int_{\R}|\partial_v^{n+1}\tilde{f}_0(x,v)|\mathrm{d}v.
\end{align}Then (using \eqref{L1} for $x=x_0$, $z=\frac{x-x_0}{t+1}$)\begin{align*}
|\partial_x^n\rho(x,t)|&\leq\frac{1}{(t+1)^{n+1}}\int_{\R}\left|(\partial_v^n\tilde{f}_0)\left(x_0,\frac{x-x_0}{t+1}\right)\right|\mathrm{d}x_0\\
&\leq\frac{1}{(t+1)^{n+1}}\int_{\R}\int_{\R}|\partial_v^{n+1}\tilde{f}_0(x_0,v)|\mathrm{d}v\mathrm{d}x_0\\
&=\frac{\|\partial_v^{n+1}\tilde{f}_0\|_{L_{x,v}^1}}{(t+1)^{n+1}}=\frac{\|(\partial_x+\partial_v)^{n+1}f_0\|_{L_{x,v}^1}}{(t+1)^{n+1}}.
\end{align*}Thus using $\tilde{f}_0$ instead of $f_0$, we don't need to discuss the case $t\leq 1$ and $t\geq1$ separately.

For \eqref{eq:VY}, the proof relies on the iteration frame work in \cite{BD} for Vlasov-Poisson equations, and in \cite{GS1,GS2,GS3,WY} for Vlasov-Maxwell equations. The solution to \eqref{eq:VY} can be constructed via the standard iteration scheme by starting with $ \rho^{(0)}=0$.
Given $ \rho^{(n-1)}$ define $ \phi^{(n-1)}$ as the solution to $(1-\Delta)\phi^{(n-1)}=\rho^{(n-1)}$, i.e. 
$\phi^{(n-1)}(t)=\frac{1}{2}\mathrm{e}^{-|x|}*\rho^{(n-1)}(t)$ and define $f^{(n)}$ as the solution of the linear Vlasov equation (here we restrict to the case $d=1$)\begin{align*}
&\partial_t f^{(n)}+v\partial_x f^{(n)}-q\partial_x\phi^{(n-1)}\partial_v f^{(n)}=0,\quad f^{(n)}(x,v,0)=f_0(x,v).
\end{align*}
Then we define the associated density $\rho^{(n)}(x,t)=\int_{\R}f^{(n)}(x,v,t)\mathrm{d}v$. Now a key step is to give a uniform estimate of  $\rho^{(n)}$. 
For notational ease, in the following, we take $\rho=\rho^{(n-1)} $, $\phi=\phi^{(n-1)} $, $f=f^{(n)} $, $\rho_*=\rho^{(n)} $. Then we have
\begin{align}
\label{f1}&\partial_t f+v\partial_x f-q\partial_x\phi\partial_v f=0,\quad (1-\Delta)\phi=\rho,\quad f(x,v,0)=f_0(x,v),\\
\label{rho*}& \rho_*(x,t)=\int_{\R}f(x,v,t)\mathrm{d}v.
\end{align}
Let $\varphi_n(t)=\mathrm{e}^{\frac{(n-2)\sqrt{t}}{n+\sqrt{t}}} $ for $n\geq2$ and $\varphi_0(t)=\varphi_1(t)=1 $. Let $ \gamma(t)=0.01\ln(t+1)+0.99t+1$ then we have 
$ \gamma(0)=\gamma'(0)=1$, $0.99<\gamma(t)/(t+1)\leq1$, $\gamma''(t)=-(t+1)^{-2}/100$.
\begin{Proposition}\label{prop1}
Assume that $ f_0(x,v)$ is smooth and satisfies \eqref{f03}, $ \rho$ satisfies
\begin{align}\label{rho2}
&|\partial_x^n\rho(x,t)|\leq \frac{\varphi_n(t)(n!)^2}{\AAA\gamma(t)^{n+1}},\quad \forall\ n\in\Z,\ n\geq0,\ t\geq0.
\end{align}Let $f,\phi$ solve \eqref{f1}, and $\rho_* $ be defined in \eqref{rho*}. Then we have\begin{align*}
&|\partial_x^n\rho_*(x,t)|\leq \frac{\varphi_n(t)(n!)^2}{\ACD\gamma(t)^{n+1}},\quad \forall\ n\in\Z,\ n\geq0,\ t\geq0.
\end{align*}
\end{Proposition}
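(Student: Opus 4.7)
\noindent\textit{Proof plan.} The strategy is to mimic the free-streaming computation preceding \eqref{L1} along the true backward characteristics of \eqref{f1}, and to show that the perturbation by $q\partial_x\phi$, which is small thanks to \eqref{rho2}, introduces only controllable corrections. Let $(X(s;t,x,v),V(s;t,x,v))$ denote the backward characteristics, solving $\dot X=V$, $\dot V=-q\partial_x\phi(X,s)$ with $X(t)=x$, $V(t)=v$. Duhamel's formula gives $V(0)=v+q\int_0^t\partial_x\phi(X(s),s)\,ds$ and $X(0)=x-tv-q\int_0^t s\,\partial_x\phi(X(s),s)\,ds$, whence
\[Y(t,x,v):=X(0)-V(0)=x-(t+1)v-q\!\int_0^t\!(s+1)\,\partial_x\phi(X(s),s)\,ds,\]
and $f(x,v,t)=\tilde f_0(Y,V(0))$. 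Since $\phi=\tfrac12 e^{-|x|}*\rho$ yields $\|\partial_x^{k+1}\phi(\cdot,s)\|_\infty\le\|\partial_x^k\rho(\cdot,s)\|_\infty$, the hypothesis \eqref{rho2} makes every derivative of $\phi$ small. In particular $|\partial_vY+(t+1)|\le\delta(t+1)$ for a small absolute $\delta$, so $v\mapsto Y(t,x,v)$ is a global $C^\infty$ diffeomorphism of $\R$; writing $v(y;t,x)$ for its inverse, $\tilde W(y;t,x):=V(0;t,x,v(y;t,x))$, and $\tilde J(y;t,x):=|\partial_vY(t,x,v(y;t,x))|^{-1}$, one obtains
\[\rho_*(x,t)=\int_\R \tilde f_0\bigl(y,\tilde W(y;t,x)\bigr)\,\tilde J(y;t,x)\,dy,\]
which reduces to the formula above \eqref{L1} in the free-streaming case $\tilde W=(x-y)/(t+1)$, $\tilde J=1/(t+1)$.

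The technical core is to prove, by induction on $n$, that for every $n\ge0$
\[\bigl|\partial_x^n[\tilde W(y;t,x)-\tfrac{x-y}{t+1}]\bigr|+\bigl|\partial_x^n[\tilde J(y;t,x)-\tfrac1{t+1}]\bigr|\le \frac{c_n\,\varphi_n(t)(n!)^2}{\gamma(t)^{n+1}},\]
with small constants $c_n$ independent of $x,y,t$. These estimates will be established by differentiating the integral formulas for $V(0)$ and $Y$ in $x$ and $v$, expanding $\phi(X(s),s)$ via Faà di Bruno, and iterating à la Picard, feeding \eqref{rho2} back in at each step. The induction closes thanks to the Gevrey-$2$ identity $\sum_{k=0}^n\binom{n}{k}(k!)^2((n-k)!)^2\le 2(n!)^2$ and its multi-variable analogues, together with the tailor-made weight $\varphi_n$ (designed so that combinatorial sums like $\sum_k\binom{n}{k}\varphi_k(t)\varphi_{n-k}(t)/\varphi_n(t)$ stay uniformly bounded) and the time weight $\gamma(t)$ (which absorbs the logarithmic-type loss $\int_0^t ds/\gamma(s)$ arising in the Picard iteration for $V(0)-v$).

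With these bounds in hand, applying $\partial_x^n$ to the representation of $\rho_*$ via Leibniz and Faà di Bruno produces a sum of terms of the form $\int_\R(\partial_v^k\tilde f_0)(y,\tilde W(y;t,x))\,P_{n,k}(y,t,x)\,dy$, where each $P_{n,k}$ is a polynomial in the $x$-derivatives of $\tilde W$ and $\tilde J$ that is uniformly bounded in $y$. Bounding $|(\partial_v^k\tilde f_0)(y,\tilde W)|\le\int_\R|\partial_v^{k+1}\tilde f_0(y,v)|\,dv$ via \eqref{L1}, integrating in $y$, and invoking \eqref{f03} will yield the desired pointwise estimate with the constant improved from $\AAA$ to $\ACD$. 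The main obstacle is the induction above: propagating sharp Gevrey-$2$ bounds through the nonlinear Picard iteration for the characteristics while keeping the absolute constant good enough to \emph{halve} the a priori bound on $\rho_*$ relative to that on $\rho$. This is where the specific functional forms of $\varphi_n$ and $\gamma(t)$ become essential, and where routine estimates must be replaced by careful constant-tracking arguments.
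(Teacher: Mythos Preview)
Your representation of $\rho_*$ is equivalent to the paper's: after your change of variables $v\mapsto y=Y(t,x,v)$, the triple $(y,\tilde W,\tilde J)$ coincides with the paper's $(x_0,w_0,\partial_{x_0}w)$ coming from the \emph{boundary value} characteristic problem $X(t)=x$, $X(0)-V(0)=x_0$. So at the level of the formula you are doing the same thing, only reparametrized (the BVP formulation has the advantage that $\partial_x^n\tilde W$ is directly $\partial_x^nX(0)$, with no inverse-function bookkeeping). The Fa\`a di Bruno and Leibniz steps are also the same as in the paper. Incidentally, your ``Gevrey-$2$ identity'' $\sum_k\binom{n}{k}(k!)^2((n-k)!)^2\le 2(n!)^2$ is false already at $n=2$; the correct uniform bound is $8/3$ (and with the weights $\varphi_k$ one needs the paper's Lemma~\ref{lem1.2b}).

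The genuine gap is your mechanism for closing the induction on the characteristic derivatives. Your description (``Picard iteration on the Duhamel formulas'' and ``$\gamma$ absorbs the logarithmic-type loss $\int_0^t ds/\gamma(s)$'') misidentifies where the gains come from; that integral never appears. Three ingredients are missing. First, the relevant first-order bounds are $0<\partial_xX(s)\le\gamma(s)/\gamma(t)$ and $0\le\partial_{x_0}X(s)\le(t-s)/\gamma(t)$, which the paper obtains by an ODE \emph{comparison} argument exploiting $\gamma''(t)=-(t+1)^{-2}/100<0$ (Lemma~\ref{lem1.3}); this is the actual role of $\gamma$. Second, for $n\ge2$ the equation $\partial_s^2\partial_x^nX=h(s)\partial_x^nX+F_n(s)$ is inverted via the Green's function of $y''=hy$ with the BVP boundary conditions, which produces the weight $(t-s)/\gamma(t)$ (Lemma~\ref{lem1.3}(iii)); your direct Duhamel for $V(0)$ carries no such weight, and without it the time integrals do not close with acceptable constants. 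Third, the time integral that arises, $\int_0^t\|\partial_x^{k+1}\phi(s)\|_\infty\gamma(s)^k\frac{t-s}{\gamma(t)}\,ds$, is controlled only by using \emph{both} sides of the screening estimate $\|\partial_x^{k+1}\phi\|_\infty\le\min\{2\|\partial_x^{k-1}\rho\|_\infty,\|\partial_x^{k+1}\rho\|_\infty\}$ and the dedicated interpolation Lemma~\ref{lem1.2a}; using only $\|\partial_x^{k+1}\phi\|_\infty\le\|\partial_x^{k}\rho\|_\infty$ as you propose loses a factor $(k+1)^2$ and the bound on $\varphi_k$ fails. None of these three points is cosmetic; each is needed to get the constant from $1500$ down to $3000$.
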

\begin{proof}[Proof of Theorem \ref{thm1}]By Proposition \ref{prop1} and induction we have\begin{align}\label{rho4}
&|\partial_x^n\rho^{(k)}(x,t)|\leq \frac{\varphi_n(t)(n!)^2}{\ACD\gamma(t)^{n+1}},\quad \forall\ n\in\Z,\ n\geq0,\ k\in\Z_+, \ t\geq0.
\end{align}Using \eqref{rho4} and the same arguments as in \cite{BD,CHL}, we can prove that $(f^{(k)},\rho^{(k)},\phi^{(k)})$ is a Cauchy\\ sequence converging to a $C^1$ solution $(f,\rho,\phi)$ to \eqref{eq:VY}. Then by \eqref{rho4} and the Gagliardo–\\Nirenberg inequality, we have $\rho^{(k)}\to\rho$ in $L^{\infty}([0,T];C^n(\R))$
for all $T>0$, $n\in\Z_+$, thus\begin{align*}
&|\partial_x^n\rho(x,t)|\leq \frac{\varphi_n(t)(n!)^2}{\ACD\gamma(t)^{n+1}},\quad \forall\ n\in\Z,\ n\geq0, \ t\geq0.
\end{align*}And this implies Theorem \ref{thm1} by noting that $ \varphi_n(t)\leq \mathrm{e}^{n}\leq 2.97^{n}$, $ \gamma(t)\geq0.99(t+1)$ and
\begin{align*}
&\frac{\varphi_n(t)(n!)^2}{\ACD\gamma(t)^{n+1}}\leq \frac{2.97^{n}(n!)^2}{\ACD\cdot0.99^{n+1}(t+1)^{n+1}}=
\frac{3^{n}(n!)^2}{2970(t+1)^{n+1}}\leq\frac{3^{n}(n!)^2}{10^3(t+1)^{n+1}}.
\end{align*}\end{proof}
It remains to prove Proposition \ref{prop1}. Motivated by \cite{CHL,HRV}, we use the following auxiliary boundary value problem:\begin{align*}
\frac{\mathrm{d}}{\mathrm{d}s}X(s)=V(s),\quad \frac{\mathrm{d}}{\mathrm{d}s}V(s)=-q(\partial_x\phi)(X(s),s),\quad X(t)=x,\quad X(0)-V(0)=x_0.
\end{align*}More precisely, we should write $X(s)=X(s;x,x_0,t)$, $V(s)=V(s;x,x_0,t)$. Then we have 
$$f(x,V(t;x,x_0,t),t)=f_0(x_0+V(0;x,x_0,t),V(0;x,x_0,t))=\tilde{f}_0(x_0,V(0;x,x_0,t)),$$ and the density function $\rho_*$ defined in \eqref{rho*} can be
 represented as\begin{align*}
&\rho_*(x,t)=\int_{\R}f(x,v,t)\mathrm{d}v=\int_{\R}\tilde{f}_0(x_0,V(0;x,x_0,t))\frac{\partial V(t;x,x_0,t)}{\partial x_0}\mathrm{d}x_0.
\end{align*}Let $w(x,x_0,t)=V(t;x,x_0,t)$, $w_0(x,x_0,t)=V(0;x,x_0,t)$ then we have\begin{align}\label{rho1}
&\rho_*(x,t)=\int_{\R}\tilde{f}_0(x_0,w_0(x,x_0,t))\frac{\partial w}{\partial x_0}(x,x_0,t)\mathrm{d}x_0.
\end{align}Thanks to \eqref{rho1}, Proposition \ref{prop1} follows from the following (see \eqref{f02} in Lemma \ref{lem1.8}) \begin{align}\label{f02a}
&\int_{\R}|\partial_x^n[\tilde{f}_0(x_0,w_0(x,x_0,t))\partial_{x_0}w(x,x_0,t)]|\mathrm{d}x_0\leq\frac{(n!)^2\varphi_n(t)}{\ACD\gamma(t)^n}.
\end{align}The proof of 
\eqref{f02a} is based on  the estimate of  the characteristic in section 3, especially \eqref{X1}, \eqref{X2} in Lemma \ref{lem1.6} and \eqref{X3}, \eqref{X4} in Lemma \ref{lem1.8}.
\section{Estimate for derivatives of composed functions and weight functions}
The expression of derivatives of composed functions is
 given by the Fa\'a di Bruno formula:\begin{align}\label{Fg1}
&(F\circ g)^{(n)}=\sum_{*}\frac{n!}{m_1!\cdots m_n!}F^{(k)}\circ g\cdot\prod_{j=1}^n(g^{(j)}/j!)^{m_j}.
\end{align}Here $*$ denotes the sum over all $n$-tuples of non-negative integers $m_1,\cdots,m_n$ satisfying $\sum_{j=1}^njm_j=n$, and $k:=\sum_{j=1}^nm_j.$ 
The Fa\'a di Bruno formula is very useful when dealing with Gevrey regularity, see for example \cite{BM,IJ1,IJ2,IPWW2,MZ,MV}. 

It is very important to estimate the coefficients in \eqref{Fg1}. As the proof is very technical, if the reader is
 not interested in the details, you can skip the proof in this section.
\begin{lemma}\label{lem1.1}
Assume that $n\in\Z_+$, $m_j\in\Z$, $m_j\geq0$,  $\sum_{j=1}^njm_j=n$,  $\sum_{j=1}^nm_j=k$, $t\geq 0$, let $s=2k-m_1$, then we have (here $\prod_{j=2}^n=1 $, $\sum_{j=2}^n=0 $ for the case $n=1$)
\begin{align}\label{n1}
&\frac{(k!)^2}{m_1!n!}\prod_{j=2}^n(j!/2)^{m_j}\leq (s/n)^{(s-2)/2}\prod_{j=2}^n(j/n)^{(j-2)m_j/2},\\
\label{n2}&\frac{(k!)^2\varphi_k(t)}{m_1!n!\varphi_n(t)}\prod_{j=2}^n(j!\varphi_j(t)/2)^{m_j}\leq \prod_{j=2}^n(j/n)^{(j-2)m_j/4},\\
\label{n3}&\sum_{j=2}^n(j/n)^{(j-2)/4}\leq15,\\
\label{n4}&\sum_{*}\frac{(k!)^2\varphi_k(t)}{m_1!\cdots m_n!}\prod_{j=2}^n(j!\varphi_j(t)/200)^{m_j}\leq n!\varphi_n(t)\mathrm{e}^{0.15}.
\end{align} 
\end{lemma}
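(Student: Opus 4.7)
The plan is to prove the four inequalities in order, observing that (n2) is a weighted refinement of (n1), that (n3) is an elementary one-variable estimate, and that (n4) is a straightforward assembly of (n2) and (n3). So the real work lies in (n1), from which everything else will cascade.

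For (n1), the first observation is the identity
\begin{align*}
n-s=\sum_{j=2}^n(j-2)m_j,
\end{align*}
which follows at once from $\sum jm_j=n$ and $s=2k-m_1=m_1+2\sum_{j\geq 2}m_j$. This tells us that the total exponent of $n$ on the right-hand side is $(s-2)/2+(n-s)/2=(n-2)/2$, so after pulling out a common $n^{(n-2)/2}$ the inequality becomes a purely combinatorial bound of the form $(k!)^2\prod_{j\geq 2}(j!/2)^{m_j}\leq m_1!\,n!\,s^{(s-2)/2}n^{-(n-2)/2}\prod_{j\geq 2}j^{(j-2)m_j/2}$. I would then take logarithms and apply Stirling's inequality (using $k!\leq\sqrt{2\pi k}\,k^k e^{-k}e^{1/(12k)}$ for the upper bounds and the matching lower bound for $n!$ and $m_1!$), reducing matters to a sum of single-term inequalities in the variables $m_j$. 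Equivalently, one may induct on the "non-trivial length" $k-m_1$: the extremal cases $m_1=n$ and $m_n=1$ can be checked directly (both give equality up to a factor $\leq 1$), and a swap argument preserving $\sum jm_j$ shows the LHS/RHS ratio is maximized at these extremes. This step will be the main obstacle, since the numerical constants must come out correctly and the Stirling analysis at the boundary cases is delicate.

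For (n2), dividing it by (n1) reduces the claim to
\begin{align*}
\frac{\varphi_k(t)\prod_{j\geq 2}\varphi_j(t)^{m_j}}{\varphi_n(t)}\leq \left(\frac{n}{s}\right)^{(s-2)/2}\prod_{j=2}^n\left(\frac{n}{j}\right)^{(j-2)m_j/4},
\end{align*}
in which the right-hand side is $\geq 1$ since $s\leq n$ and $j\leq n$. The point is that $\log\varphi_j(t)=(j-2)\sqrt{t}/(j+\sqrt{t})$ is a concave, increasing function of $j$, so using $\sum_{j\geq 2}m_j=k-m_1$ and $\sum_{j\geq 2}jm_j=n-m_1$ together with Jensen and the identity $s=m_1+2(k-m_1)$ will let me bound the weighted $\varphi$-combination by the right-hand side with room to spare. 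At $t=0$ this is trivial, and the concavity argument absorbs the general $t$.

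For (n3), I would split at, say, $j=n/2$: for $j\geq n/2$, writing $\ell=n-j$ and estimating $(j/n)^{(j-2)/4}=(1-\ell/n)^{(n-\ell-2)/4}\leq e^{-\ell(1-\ell/n)/4}$ yields a geometric tail bounded by $\sum_{\ell\geq 0}e^{-\ell/8}\leq 1/(1-e^{-1/8})<10$; for $j<n/2$ the terms are exponentially small in $n$, contributing $O(1)$, and small $n$ is checked numerically. For (n4), factor $(j!\varphi_j/200)^{m_j}=(j!\varphi_j/2)^{m_j}\cdot 100^{-m_j}$ and apply (n2) to isolate
\begin{align*}
\sum_{*}\prod_{j=2}^n\frac{1}{m_j!}\Bigl[(j/n)^{(j-2)/4}/100\Bigr]^{m_j}
\leq \prod_{j=2}^n\exp\!\bigl[(j/n)^{(j-2)/4}/100\bigr]
=\exp\!\Bigl(\tfrac{1}{100}\sum_{j=2}^n(j/n)^{(j-2)/4}\Bigr)\leq e^{0.15},
\end{align*}
where the first inequality drops the constraint $\sum jm_j=n$ and sums each $m_j\geq 0$ independently, and (n3) bounds the exponent. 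Multiplying by $n!\varphi_n(t)$ recovers (n4).
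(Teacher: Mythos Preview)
Your strategy for (n4) is exactly the paper's: factor out $100^{-m_j}$, apply (n2) term by term, then drop the constraint $\sum jm_j=n$ to exponentiate freely and finish with (n3). Your sketch for (n3) is also in the same spirit as the paper's (split the range of $j$ and control each half geometrically), though the paper uses the threshold $n=16$ versus $n\ge17$ and the explicit claim $(j/n)^{(j-2)/4}\le(6/7)^{j-2}+(7/8)^{n-j}$.

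The gap is in (n1) and, by inheritance, (n2). You correctly identify $n-s=\sum_{j\ge2}(j-2)m_j$ and correctly flag (n1) as ``the main obstacle,'' but neither of your two proposed routes is a proof. The Stirling route is not robust here: for small $k$, $m_1$, or $m_j$ the error factors from Stirling are not uniformly $\le1$ in the right direction, and the inequality is sharp at $m_1=n$, so there is no slack to absorb them. The swap/induction route is undefined---you do not say which swap, and a move that preserves $\sum jm_j$ changes $k$, $s$, $m_1$ simultaneously, so monotonicity of the ratio is far from obvious.

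What the paper does instead is isolate two clean one-variable inequalities (its Lemma~2.3): for $2\le j\le n$,
\[
j!/2\le(n!/2)^{(j-2)/(n-2)}(j/n)^{(j-2)/2},\qquad
\varphi_j(t)\le\varphi_n(t)^{(j-2)/(n-2)}(j/n)^{-(j-2)/4}.
\]
The first is proved by showing $(j!/2)^{1/(j-2)}/\sqrt{j}$ is nondecreasing in $j$ (a one-line Bernoulli-inequality computation); the second comes from the explicit form of $\log\varphi_j$ and the elementary bound $\ln(a/b)\ge 2(a-b)/(a+b)$. With these in hand, (n1) follows from the binomial identity
\[
\frac{(k!)^2}{m_1!\,s!}=\binom{k}{k-m_1}\Big/\binom{s}{k-m_1}\le1
\]
(using $s=2k-m_1\ge k$) together with the index identity $(s-2)+\sum_{j\ge2}(j-2)m_j=n-2$, and (n2) follows by multiplying (n1) by the $\varphi$-inequality applied to $j$ and to $s$ (using $\varphi_k\le\varphi_s$). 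Your concavity/Jensen idea for (n2) is heading toward the second displayed inequality above, but Jensen alone gives the wrong shape: you need the pointwise comparison of $\varphi_j$ against a power of $\varphi_n$ times the specific factor $(j/n)^{-(j-2)/4}$, not just a bound on an average. That pointwise inequality, together with the binomial trick for $(k!)^2/(m_1!s!)$, is the missing idea.
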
\begin{remark}\label{rem1}
Recall that $\varphi_n(t)=\mathrm{e}^{\frac{(n-2)\sqrt{t}}{n+\sqrt{t}}} $ for $n\geq2$ and $\varphi_0(t)=\varphi_1(t)=1 $, then we have
$\varphi_{n+1}(t)/\varphi_n(t)=\mathrm{e}^{\frac{(\sqrt{t}+2)\sqrt{t}}{(n+\sqrt{t})(n+1+\sqrt{t})}}\geq1 $ for $n\geq2$ and $\varphi_{n+1}(t)/\varphi_n(t)=1 $ for $n=0,1$. Thus $\varphi_n(t) $ is increasing in $n$. As $\varphi_n(t)=\mathrm{e}^{n-2-\frac{(n-2)n}{n+\sqrt{t}}} $ for $n\geq2$, $\varphi_n(t) $ is increasing in $t$.
\end{remark} We need the following inequality
\begin{align}\label{log}
&\ln \frac{a}{b}\geq \frac{2(a-b)}{a+b},\quad \forall a\geq b>0.\end{align}
In fact let $x=\frac{a-b}{a+b}$, then $x\in[0,1)$, \eqref{log} becomes $\ln \frac{1+x}{1-x}\geq 2x $ and follows from 
$\ln \frac{1+x}{1-x}=\sum_{n=0}^{+\infty}\frac{2x^{2n+1}}{2n+1} $. We also need the following auxiliary lemma.
\begin{lemma}\label{lem1.1a}
If $n\geq j\geq2$, $n\geq3$, $n,j\in\Z$ then
\begin{align}\label{j1}
&j!/2\leq (n!/2)^{(j-2)/(n-2)}(j/n)^{(j-2)/2},\\
\label{j2}&\varphi_j(t)\leq \varphi_n(t)^{(j-2)/(n-2)}(j/n)^{-(j-2)/4}.\end{align}\end{lemma}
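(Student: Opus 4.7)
Both inequalities compare quantities that agree at the endpoints $j=2$ (both sides equal $1$) and $j=n$, so the plan is to take logarithms and reduce each to a monotonicity statement in the remaining parameter.

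For \eqref{j2}, note that $\ln\varphi_m(t)=(m-2)\sqrt{t}/(m+\sqrt{t})$ for every $m\ge 2$ (this vanishes at $m=2$), so the case $j=2$ is trivial. For $j\ge 3$, taking logarithms and dividing by $j-2>0$ rewrites the inequality as
\begin{align*}
\frac{\sqrt{t}}{j+\sqrt{t}}-\frac{\sqrt{t}}{n+\sqrt{t}}\le\frac{1}{4}\ln(n/j).
\end{align*}
Both sides vanish at $n=j$, and differentiating in $n$ reduces this to the pointwise comparison $\sqrt{t}/(u+\sqrt{t})^2\le 1/(4u)$ for $u>0$, which is equivalent to $(u-\sqrt{t})^2\ge 0$. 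Integrating in $u$ from $j$ to $n$ yields \eqref{j2}.

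For \eqref{j1}, the case $j=2$ is again trivial; for $j\ge 3$, taking logarithms and dividing by $j-2$ rewrites the inequality as $g(j)\le g(n)$, where
\begin{align*}
g(m):=\frac{\ln(m!/2)}{m-2}-\frac{1}{2}\ln m,
\end{align*}
so it suffices to show that $g$ is nondecreasing on $\{3,4,\ldots\}$. Multiplying $g(m+1)\ge g(m)$ through by $(m-1)(m-2)>0$ converts it into $L(m)\ge 0$, where
\begin{align*}
L(m):=(m-2)\ln(m+1)-\frac{(m-1)(m-2)}{2}\ln\frac{m+1}{m}-\ln(m!/2).
\end{align*}
A direct computation gives $L(3)=\ln 4-\ln(4/3)-\ln 3=0$, and the forward difference telescopes to
\begin{align*}
L(m+1)-L(m)=\frac{(m-1)(m-2)}{2}\ln\frac{(m+1)^2}{m(m+2)}\ge 0\qquad(m\ge 3),
\end{align*}
since $(m+1)^2=m^2+2m+1>m(m+2)$. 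Induction gives $L(m)\ge 0$ for all $m\ge 3$, proving \eqref{j1}.

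The bound \eqref{j2} is essentially automatic once the integral comparison is noticed. The delicate part of \eqref{j1} is choosing the reformulation as the discrete monotonicity of $g$, and then recognizing that the coefficient $\frac{m-1}{2}$ inside $L$ is chosen precisely so that, after substituting $\ln((m+1)!/2)=\ln(m!/2)+\ln(m+1)$, the $\ln(m+1)$ and $\ln(m+2)$ cross-terms in $L(m+1)-L(m)$ combine into a single elementary inequality.
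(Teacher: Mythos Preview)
Your proof is correct. The overall strategy matches the paper's: handle $j=2$ separately, then for $j\ge 3$ take logarithms, divide by $j-2$, and reduce each inequality to a monotonicity statement (the paper writes $a_m=(m!/2)^{1/(m-2)}$ and proves $a_m/\sqrt{m}$ is nondecreasing, which is exactly your $g(m)$). The technical execution differs in both parts, however. For \eqref{j2} the paper bounds $\frac{\sqrt{t}}{j+\sqrt{t}}-\frac{\sqrt{t}}{n+\sqrt{t}}$ algebraically by $\frac{(\sqrt{n}-\sqrt{j})}{\sqrt{n}+\sqrt{j}}$ and then invokes the elementary inequality $\ln(a/b)\ge 2(a-b)/(a+b)$, whereas you differentiate in the continuous parameter and reduce directly to $(u-\sqrt{t})^2\ge 0$; your route avoids introducing the auxiliary logarithm inequality and is a bit cleaner. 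For \eqref{j1} the paper bounds $(a_{m-1}/a_m)^{(m-2)(m-3)}=\prod_{k=1}^{m-3}(1-k/m)$ by Bernoulli's inequality $(1-1/m)^{\sum k}$, while you telescope a second time and reduce to $\ln\frac{(m+1)^2}{m(m+2)}\ge 0$; the paper's argument is a one-shot product estimate, yours trades that for an extra induction layer but lands on an equally transparent endpoint inequality.
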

\begin{proof}
If $j=2$, as $\varphi_2(t)=1$, \eqref{j1}, \eqref{j2} become $1\leq 1$, $1\leq 1$, and are clearly true. Now we assume $n\geq j\geq3$.
Then  \eqref{j1} becomes $(j!/2)^{1/(j-2)}\leq (n!/2)^{1/(n-2)}(j/n)^{1/2} $. Let $a_j=(j!/2)^{1/(j-2)}$ then \eqref{j1} becomes
\begin{align}\label{j4}
&a_j/\sqrt{j}\leq a_n/\sqrt{n},\quad \forall\ n\geq j\geq3,\ n,j\in\Z.\end{align}
By induction, it is enough to prove that \begin{align}\label{k4}
&a_{n-1}/\sqrt{n-1}\leq a_n/\sqrt{n},\quad \forall\ n\in\Z,\ n\geq4.\end{align}
For $n\in\Z,\ n\geq4$ we have (using the Bernoulli inequality $1-kx\leq(1-x)^k$ for $x=1/n$)\begin{align*}
(a_{n-1}/a_n)^{(n-2)(n-3)}=&\frac{((n-1)!/2)^{n-2}}{(n!/2)^{n-3}}=\frac{(n-1)!/2}{n^{n-3}}=\prod_{k=1}^{n-3}\frac{n-k}{n}\leq \prod_{k=1}^{n-3}(1-1/n)^k\\=&(1-1/n)^{\sum_{k=1}^{n-3}k}=(1-1/n)^{(n-2)(n-3)/2},\\
a_{n-1}/a_n\leq&(1-1/n)^{1/2}=\sqrt{n-1}/\sqrt{n},
\end{align*}which implies \eqref{k4}, hence \eqref{j4} and \eqref{j1}. Recall that $\varphi_k(t)=\mathrm{e}^{\frac{(k-2)\sqrt{t}}{k+\sqrt{t}}} $ for $k\geq2$ we have\begin{align*}
&\ln \varphi_j(t)-\ln\big\{\varphi_n(t)^{(j-2)/(n-2)}\big\}=\frac{(j-2)\sqrt{t}}{j+\sqrt{t}}-\frac{(j-2)\sqrt{t}}{n+\sqrt{t}}=
\frac{(j-2)(n-j)\sqrt{t}}{(j+\sqrt{t})(n+\sqrt{t})}\\
=&\frac{(j-2)(n-j)\sqrt{t}}{(\sqrt{n}+\sqrt{j})^2\sqrt{t}+(\sqrt{t}-\sqrt{nj})^2}\leq \frac{(j-2)(n-j)}{(\sqrt{n}+\sqrt{j})^2}=\frac{(j-2)(\sqrt{n}-\sqrt{j})}{\sqrt{n}+\sqrt{j}}.
\end{align*}By \eqref{log} we have $\ln \frac{\sqrt{n}}{\sqrt{j}}\geq \frac{2(\sqrt{n}-\sqrt{j})}{\sqrt{n}+\sqrt{j}} $, thus
\begin{align*}
&\ln \varphi_j(t)-\ln\big\{\varphi_n(t)^{(j-2)/(n-2)}\big\}\leq\frac{(j-2)(\sqrt{n}-\sqrt{j})}{\sqrt{n}+\sqrt{j}}
\leq \frac{j-2}{2}\ln \frac{\sqrt{n}}{\sqrt{j}}=\ln\big\{(j/n)^{-(j-2)/4}\big\},
\end{align*}which implies \eqref{j2}. This completes the proof.
\end{proof}
\begin{proof}[Proof of Lemma \ref{lem1.1}]
As $m_j\geq0$,  $\sum_{j=1}^njm_j=n$,  $\sum_{j=1}^nm_j=k$, we have $nk=\sum_{j=1}^nnm_j\geq\sum_{j=1}^njm_j=n$, $k\geq1$, $k\in\Z_+$. And if $k=1$ then 
$m_j=0$ for $1\leq j<n$ and $m_n=1$.

If $n=1$ then $m_1=1$, $k=1$, $s=1$, $\varphi_1(t)=1$, \eqref{n1}--\eqref{n4} become $1\leq1$, $1\leq1$, $0\leq15$, $1\leq\mathrm{e}^{0.15}$, and are clearly true.

If $n=2$ then $(m_1,m_2,k,s)=(2,0,2,2)$ or $(0,1,1,2)$, $\varphi_1(t)=\varphi_2(t)=1$. \eqref{n1}--\eqref{n4} become 
$1\leq1$, $1/2\leq1$; $1\leq1$, $1/2\leq1$; $1\leq15$; $2+1/100\leq2\mathrm{e}^{0.15}$. They are clearly true.

In the following we assume $n\geq3$. As $k=\sum_{j=1}^nm_j\geq m_1$, $s=2k-m_1\geq k$ we have
\begin{align}\label{k1}
&\frac{(k!)^2}{m_1!n!}\prod_{j=2}^n(j!/2)^{m_j}=\frac{(k!)^2}{m_1!s!}\frac{s!/2}{n!/2}\prod_{j=2}^n(j!/2)^{m_j},\quad
\frac{(k!)^2}{m_1!s!}={k\choose k-m_1}\Big/{s\choose k-m_1}\leq1.\end{align}
Notice that $k\in\Z_+$, $s\geq k$; if $k>1$ then $s\geq k\geq2$; if $k=1$ then 
$m_j=0$ for $1\leq j<n$ and $m_n=1$, thus $m_1=0$ (as $n>1$), $s=2k-m_1=2$. Therefore we  always have $s\geq2$ (for $n>1$).
By \eqref{j1} in Lemma \ref{lem1.1a} we have 
\begin{align}\notag
\frac{s!/2}{n!/2}\prod_{j=2}^n(j!/2)^{m_j}\leq& \frac{(n!/2)^{(s-2)/(n-2)}(s/n)^{(s-2)/2}}{n!/2}\prod_{j=2}^n\big\{(n!/2)^{(j-2)/(n-2)}(j/n)^{(j-2)/2}\big\}^{m_j}\\
\label{s1}=&(n!/2)^{(s-2+\sum_{j=2}^n(j-2)m_j)/(n-2)-1}(s/n)^{(s-2)/2}\prod_{j=2}^n(j/n)^{(j-2)m_j/2}.\end{align}
As $\sum_{j=1}^njm_j=n$, $\sum_{j=1}^nm_j=k$, we have
\begin{align}\label{s2}
s-2+\sum_{j=2}^n(j-2)m_j=s-2+m_1+\sum_{j=1}^n(j-2)m_j=s-2+m_1+n-2k=n-2.\end{align}
Then \eqref{n1} follows from \eqref{k1}, \eqref{s1} and \eqref{s2}.
Notice that
\begin{align}\label{k2}
&\frac{(k!)^2\varphi_k(t)}{m_1!n!\varphi_n(t)}\prod_{j=2}^n(j!\varphi_j(t)/2)^{m_j}=
\bigg\{\frac{(k!)^2}{m_1!n!}\prod_{j=2}^n(j!/2)^{m_j}\bigg\}\bigg\{\frac{\varphi_k(t)}{\varphi_n(t)}\prod_{j=2}^n\varphi_j(t)^{m_j}\bigg\}.\end{align}
As $s\geq k$, we have $1\leq\varphi_k(t)\leq \varphi_s(t)$ (see Remark \ref{rem1}); as $s\geq2$ by \eqref{j2} in Lemma \ref{lem1.1a} and  \eqref{s2} we have
\begin{align}\label{k3}
&\frac{\varphi_k(t)}{\varphi_n(t)}\prod_{j=2}^n\varphi_j(t)^{m_j}\leq 
\frac{\varphi_s(t)}{\varphi_n(t)}\prod_{j=2}^n\varphi_j(t)^{m_j}\\ \notag
\leq& \frac{\varphi_n(t)^{(s-2)/(n-2)}(s/n)^{-(s-2)/4}}{\varphi_n(t)}\prod_{j=2}^n\big\{\varphi_n(t)^{(j-2)/(n-2)}(j/n)^{-(j-2)/4}\big\}^{m_j}\\ \notag
=&\varphi_n(t)^{(s-2+\sum_{j=2}^n(j-2)m_j)/(n-2)-1} (s/n)^{-(s-2)/4}\prod_{j=2}^n(j/n)^{-(j-2)m_j/4}\\ \notag
=& (s/n)^{-(s-2)/4}\prod_{j=2}^n(j/n)^{-(j-2)m_j/4}\leq (s/n)^{-(s-2)/2}\prod_{j=2}^n(j/n)^{-(j-2)m_j/4}.\end{align}
Then \eqref{n2} follows from \eqref{n1}, \eqref{k2} and \eqref{k3}.

For \eqref{n3}, we consider 2 cases. If $n\leq 16$ then\begin{align*}
&\sum_{j=2}^n(j/n)^{(j-2)/4}\leq\sum_{j=2}^n1=n-1\leq15.
\end{align*}If $n\geq 17$ we first claim that \begin{align}\label{j3}
&(j/n)^{(j-2)/4}\leq (6/7)^{j-2}+(7/8)^{n-j},\quad \forall\ 2\leq j\leq n,\ n\geq 17,\ n,j\in\Z.
\end{align}
In fact if $2\leq j\leq n/2$ then $(j/n)^{(j-2)/4}\leq (1/2)^{(j-2)/4}\leq (6/7)^{j-2} $ as $(1/2)^{1/4}\leq 6/7$. 

If $n/2< j\leq n$  then by \eqref{log}  we have\begin{align*}
&-\ln\big\{(j/n)^{(j-2)/4}\big\}=\frac{j-2}{4}\ln\frac{n}{j}\geq \frac{j-2}{4}\frac{2(n-j)}{n+j}=(n-j)\frac{j-2}{2(n+j)}.
\end{align*}
As $0<n/2< j$, $n,j\in\Z$ we have $0<n<2j$, $0<n\leq 2j-1$, as $n\geq 17$ we have $j>n/2\geq 17/2>8$, $j\geq9$, thus
\begin{align*}
&\frac{j-2}{2(n+j)}\geq \frac{j-2}{2(2j-1+j)}=\frac{1}{6+10/(j-2)}\geq \frac{1}{6+10/(9-2)}=\frac{7}{52}\geq \ln\frac{8}{7},\\
&-\ln\big\{(j/n)^{(j-2)/4}\big\}\geq (n-j)\frac{j-2}{2(n+j)}\geq (n-j)\ln\frac{8}{7}=-\ln\big\{(7/8)^{n-j}\big\},\\
&(j/n)^{(j-2)/4}\leq (7/8)^{n-j}.
\end{align*}
This completes the proof of \eqref{j3}. By \eqref{j3} we have (for $n\geq 17$)\begin{align*}
\sum_{j=2}^n(j/n)^{(j-2)/4}&\leq\sum_{j=2}^n\big\{(6/7)^{j-2}+(7/8)^{n-j}\big\}=7(1-(6/7)^{n-1})+8(1-(7/8)^{n-1})\\
&\leq7+8=15.
\end{align*}
Combining the cases $n\leq 16$ and $n\geq 17$ we conclude \eqref{n3}. By \eqref{n2} and \eqref{n3} we have
\begin{align*}
&\frac{1}{n!\varphi_n(t)}\sum_{*}\frac{(k!)^2\varphi_k(t)}{m_1!\cdots m_n!}\prod_{j=2}^n(j!\varphi_j(t)/200)^{m_j}
=\sum_{*}\frac{(k!)^2\varphi_k(t)}{m_1!n!\varphi_n(t)}\prod_{j=2}^n\frac{(j!\varphi_j(t)/2)^{m_j}}{100^{m_j}m_j!}\\
\leq&\sum_{*}\prod_{j=2}^n\frac{(j/n)^{(j-2)m_j/4}}{100^{m_j}m_j!}\leq \sum_{m_2=0}^{+\infty}\cdots\sum_{m_n=0}^{+\infty}\prod_{j=2}^n\frac{(j/n)^{(j-2)m_j/4}}{100^{m_j}m_j!}=
\prod_{j=2}^n\sum_{m_j=0}^{+\infty}\frac{(j/n)^{(j-2)m_j/4}}{100^{m_j}m_j!}\\
=&\prod_{j=2}^n\exp\left\{(j/n)^{(j-2)/4}/100\right\}=\exp\Bigg\{\sum_{j=2}^n(j/n)^{(j-2)/4}/100\Bigg\}\leq
\exp\left\{15/100\right\},
\end{align*}which implies \eqref{n4}. Here we used that for fixed $n$, under the restriction $\sum_{j=1}^njm_j=n$, $m_1$ is uniquely determined by $m_2,\cdots,m_n$.  This completes the proof.
\end{proof}
We also need the following estimates for the weight functions $\varphi_n(t) $. 
\begin{lemma}\label{lem1.2b}For $n\in\Z_+$, $t\geq0$  we have $\sum_{k=1}^{n}{n\choose k}^{-1}
\frac{\varphi_{n-k}(t)\varphi_k(t)}{\varphi_{n}(t)}\leq\frac{5}{3}.
$ For $n\in\Z$, $n\geq0$, $t\geq0$  we have $\sum_{k=0}^{n}{n\choose k}^{-1}
\frac{\varphi_{n-k}(t)\varphi_k(t)}{\varphi_{n}(t)}\leq\frac{8}{3}.
$\end{lemma}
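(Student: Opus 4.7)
The plan is to reduce the second inequality to the first, and then attack the first by decomposing the sum according to the size of $k$. For the reduction, observe that the $k=0$ term in the second sum equals $\binom{n}{0}^{-1}\varphi_0\varphi_n/\varphi_n = 1$, so that sum equals $1 + \sum_{k=1}^n\binom{n}{k}^{-1}\varphi_{n-k}\varphi_k/\varphi_n$; the first inequality then gives the second via $1 + 5/3 = 8/3$. For the degenerate case $n=0$, the second sum reduces to its single $k=0$ term, equal to $1 \leq 8/3$.

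For the first inequality, I would verify $n = 1,2,3,4$ by direct computation, using $\varphi_1=\varphi_2=1$ and the fact (Remark \ref{rem1}) that $\varphi_j$ is non-decreasing in $j$, so $\varphi_{n-k}/\varphi_n \leq 1$; one finds the bound is essentially saturated at $n=3,4$. For $n \geq 5$, split: the $k=n$ term contributes exactly $1$; each of $k \in \{1,n-1\}$ contributes at most $1/n$ (since $\varphi_1=1$); each of $k \in \{2,n-2\}$ contributes at most $2/(n(n-1))$ (since $\varphi_2=1$); and for $3 \leq k \leq n-3$ (which requires $n \geq 6$), applying Lemma \ref{lem1.1a} equation \eqref{j2} to both $\varphi_k$ and $\varphi_{n-k}$ yields
\begin{equation*}
\frac{\varphi_k\varphi_{n-k}}{\varphi_n} \leq \varphi_n^{-2/(n-2)}\left(\frac{k}{n}\right)^{-(k-2)/4}\left(\frac{n-k}{n}\right)^{-(n-k-2)/4} \leq \left(\frac{k}{n}\right)^{-(k-2)/4}\left(\frac{n-k}{n}\right)^{-(n-k-2)/4},
\end{equation*}
since $\varphi_n \geq 1$.

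Adding the first three contributions gives at most $1 + 2/n + 4/(n(n-1))$, so it remains to show the interior sum $S_n := \sum_{k=3}^{n-3}\binom{n}{k}^{-1}(k/n)^{-(k-2)/4}((n-k)/n)^{-(n-k-2)/4}$ is at most $2/3 - 2/n - 4/(n(n-1))$. For this I would use Stirling-type estimates of the form $\binom{n}{k}^{-1} \leq C \sqrt{k(n-k)/n}(k/n)^k((n-k)/n)^{n-k}$, turning each interior term into roughly $C'\sqrt{n}\,xy\,(x^xy^y)^{3n/4}$ with $x=k/n$, $y=1-x$. Since $x^xy^y < 1$ on $(0,1)$ with minimum $1/2$ at $x=1/2$, this decays rapidly in $n$ (in fact like $n^{-11/4}$ for fixed $k=3$), so for $n$ large the interior sum is negligible; for the borderline cases $n=6,7,8$, a term-by-term numerical check within the Lemma \ref{lem1.1a} bound suffices.

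The main obstacle is the quantitative balance. Since the bound $5/3$ is saturated at $n=3,4$, there is no slack whatsoever at those values, so the structural identities $\varphi_1=\varphi_2=1$ must be used to make the $k \in \{1,n-1,2,n-2\}$ terms collapse exactly. For the small-to-moderate range $n=6,7,\ldots$ the slack $2/3 - 2/n - 4/(n(n-1))$ is positive but modest, and one must bound the interior sum concretely against it, which requires careful bookkeeping of the weight functions and binomial coefficients; for large $n$ the polynomial decay of $S_n$ makes the estimate comfortable.
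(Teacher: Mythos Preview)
Your reduction of the second inequality to the first, and the handling of small $n$ by direct computation using $\varphi_1=\varphi_2=1$, match the paper. For the main range the paper takes a cleaner route that avoids Stirling entirely. Rather than applying \eqref{j2} of Lemma~\ref{lem1.1a} with $n$ as you do, the paper applies both \eqref{j1} and \eqref{j2} with $n$ replaced by $n-2$: with $j=n-k$ and $2\leq k\leq n-2$, $n\geq6$, one gets
\[
\binom{n}{2}\binom{n}{k}^{-1}=\frac{(j!/2)(k!/2)}{(n-2)!/2}\leq \Big(\tfrac{j}{n-2}\Big)^{\frac{j-2}{2}}\Big(\tfrac{k}{n-2}\Big)^{\frac{k-2}{2}},
\quad
\frac{\varphi_j\varphi_k}{\varphi_n}\leq\frac{\varphi_j\varphi_k}{\varphi_{n-2}}\leq\Big(\tfrac{j}{n-2}\Big)^{-\frac{j-2}{4}}\Big(\tfrac{k}{n-2}\Big)^{-\frac{k-2}{4}},
\]
and since $j,k\leq n-2$ the product of the right sides is $\leq1$. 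This yields the uniform pointwise bound $\binom{n}{k}^{-1}\varphi_{n-k}\varphi_k/\varphi_n\leq\binom{n}{2}^{-1}$ for every $2\leq k\leq n-2$, and the sum closes immediately as $\frac1n+(n-3)\binom{n}{2}^{-1}+\frac1n+1=\frac{4(n-2)}{n(n-1)}+1\leq\frac4n+1\leq\frac53$. Your approach via Stirling and case-checking $n=6,7,8$ would also succeed, but the $n\mapsto n-2$ substitution makes the factorial estimate \eqref{j1} and the weight estimate \eqref{j2} cancel exactly, eliminating both the asymptotic analysis and the numerical verifications.
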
\begin{proof}
If $n=0$ then the second inequality becomes $1\leq \frac{8}{3}$ and is clearly true. If 
$n\in \Z_+$ then the second inequality follows from the first inequality by noting that 
${n\choose k}^{-1}\frac{\varphi_{n-k}(t)\varphi_k(t)}{\varphi_{n}(t)}|_{k=0}=\varphi_0(t)=1$. It remains to prove the first inequality. 
If $n\in\Z_+$, $n\leq 5$, then for $k\in\Z\cap[1,n]$ we have $k\leq 2$ or $n-k\leq 2$, by $\varphi_0(t)=\varphi_1(t)=\varphi_2(t)=1 $ and Remark \ref{rem1} we have $\frac{\varphi_{n-k}(t)\varphi_k(t)}{\varphi_{n}(t)}=\frac{\varphi_{n-k}(t)}{\varphi_{n}(t)}\leq1$ or $\frac{\varphi_{n-k}(t)\varphi_k(t)}{\varphi_{n}(t)}=\frac{\varphi_{k}(t)}{\varphi_{n}(t)}\leq1$, thus \begin{align*}
&\sum_{k=1}^{n}{n\choose k}^{-1}
\frac{\varphi_{n-k}(t)\varphi_k(t)}{\varphi_{n}(t)}\leq \sum_{k=1}^{n}{n\choose k}^{-1}=\left\{\begin{array}{ll}
                                                                                                 1\leq 5/3, & n=1, \\
                                                                                                 1/2+1=3/2\leq 5/3, & n=2, \\
                                                                                                 1/3+1/3+1= 5/3, & n=3, \\
                                                                                                 1/4+1/6+1/4+1= 5/3, & n=4, \\
                                                                                                 2(1/5+1/10)+1=1.6\leq 5/3, & n=5, 
                                                                                               \end{array}\right.
\end{align*}
which gives the first inequality for $n\leq 5$.  Now we assume $n\geq6$, we claim that
\begin{align}\label{n5}
&{n\choose k}^{-1}
\frac{\varphi_{n-k}(t)\varphi_k(t)}{\varphi_{n}(t)}\leq {n\choose 2}^{-1},\quad \forall\ n-2\geq k\geq2,\ n\geq6,\ n,k\in\Z.
\end{align}
For $n-2\geq k\geq2,\ n\geq6,\ n,k\in\Z $, let $j=n-k$ then $n-2\geq j\geq2$, $n-2\geq4$, by \eqref{j1} in Lemma \ref{lem1.1a} and 
$(j-2)+(k-2)=n-4 $ (as $j+k=n$) we have 
\begin{align}\label{n6}
&{n\choose 2}{n\choose k}^{-1}=\frac{n!}{2(n-2)!}\frac{j!k!}{n!}=\frac{(j!/2)(k!/2)}{(n-2)!/2}\\
\notag\leq&
\frac{((n-2)!/2)^{(j-2)/(n-4)}(j/(n-2))^{(j-2)/2}((n-2)!/2)^{(k-2)/(n-4)}(k/(n-2))^{(k-2)/2}}{(n-2)!/2}\\
\notag=&(j/(n-2))^{(j-2)/2}(k/(n-2))^{(k-2)/2}\leq (j/(n-2))^{(j-2)/4}(k/(n-2))^{(k-2)/4}.
\end{align}By \eqref{j2} in Lemma \ref{lem1.1a} and 
$(j-2)+(k-2)=n-4 $  we have \begin{align*}
\varphi_{j}(t)\varphi_k(t)&\leq \varphi_{n-2}(t)^{(j-2)/(n-4)}(j/(n-2))^{-(j-2)/4}\varphi_{n-2}(t)^{(k-2)/(n-4)}(k/(n-2))^{-(k-2)/4}\\
&=\varphi_{n-2}(t)(j/(n-2))^{-(j-2)/4}(k/(n-2))^{-(k-2)/4}.
\end{align*}Then by $\varphi_{n-2}(t)\leq \varphi_{n}(t) $ (see Remark \ref{rem1}) and $j=n-k$ we have\begin{align}\label{n7}
&\frac{\varphi_{n-k}(t)\varphi_k(t)}{\varphi_{n}(t)}\leq \frac{\varphi_{j}(t)\varphi_k(t)}{\varphi_{n-2}(t)}\leq (j/(n-2))^{-(j-2)/4}(k/(n-2))^{-(k-2)/4}.
\end{align}
Now \eqref{n5} follows from \eqref{n6} and \eqref{n7}. By $\varphi_0(t)=\varphi_1(t)=1 $ and Remark \ref{rem1} we have (i) if $k=1$ then $\frac{\varphi_{n-k}(t)\varphi_k(t)}{\varphi_{n}(t)}=\frac{\varphi_{n-k}(t)}{\varphi_{n}(t)}\leq1$; (ii) if $k=n-1$ or $k=n$ then $\frac{\varphi_{n-k}(t)\varphi_k(t)}{\varphi_{n}(t)}=\frac{\varphi_{k}(t)}{\varphi_{n}(t)}\leq1$. By (i)(ii) and \eqref{n5} we have
\begin{align*}
&\sum_{k=1}^{n}{n\choose k}^{-1}\frac{\varphi_{n-k}(t)\varphi_k(t)}{\varphi_{n}(t)}\leq {n\choose 1}^{-1}+
\sum_{k=2}^{n-2}{n\choose 2}^{-1}+{n\choose n-1}^{-1}+{n\choose n}^{-1}\\=&\frac{1}{n}+(n-3){n\choose 2}^{-1}+\frac{1}{n}+1=\frac{2}{n}+\frac{2(n-3)}{n(n-1)}+1=\frac{4(n-2)}{n(n-1)}+1\leq\frac{4}{n}+1\leq\frac{4}{6}+1=\frac{5}{3}.
\end{align*}This completes the proof of the first inequality, hence Lemma \ref{lem1.2b}.
\end{proof}
\begin{lemma}\label{lem1.2a}For $n\in\Z_+$, $t\geq0$  we have\begin{align}\label{t1}
&\int_0^t \min\left\{2\varphi_{n-1}(s),\frac{\varphi_{n+1}(s)(n(n+1))^2}{\gamma(s)^{2}}\right\}\frac{t-s}{\gamma(t)}\mathrm{d}s\leq \min\left\{\frac{50}{9}n^2\varphi_{n}(t),500\varphi_{n-1}(t)\right\}.\end{align}
\end{lemma}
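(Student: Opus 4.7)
The plan is to control $I := \int_0^t \min\{A(s), B(s)\}(t-s)/\gamma(t)\,ds$, with $A(s) := 2\varphi_{n-1}(s)$ and $B(s) := (n(n+1))^2\varphi_{n+1}(s)/\gamma(s)^2$, by splitting the interval at a suitable point $s_0 = s_0(n, t)$ and using $\min\{A, B\} \le A$ on $[0, s_0]$ and $\min\{A, B\} \le B$ on $[s_0, t]$. Since $A$ is nondecreasing with $A(0) = 2$, while $B$ satisfies $B(0) = (n(n+1))^2 \ge 4$ and is eventually decreasing, the minimum actually equals $A$ for small $s$ and $B$ for large $s$, with natural crossover $s_0$ at $\gamma(s_0)^2 = (n(n+1))^2\varphi_{n+1}(s_0)/(2\varphi_{n-1}(s_0))$. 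The two target bounds will both be proved by this split, with $s_0$ chosen to optimize each.

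The key ingredients are: (i) $\gamma(s) \ge 0.99(s+1)$, so that $(t-s)/\gamma(t) \le 1/0.99$ and $\int_0^{s_0}(t-s)/\gamma(t)\,ds \le s_0/0.99$; (ii) the integration-by-parts identity $\int_0^t (t-s)/(s+1)^2\,ds = t - \ln(t+1)$ (coming from $\gamma''(s) = -0.01/(s+1)^2$), whence $\int_{s_0}^t (t-s)/(\gamma(s)^2\gamma(t))\,ds \le 1/(0.99^3(s_0+1))$; (iii) monotonicity of $\varphi_k(t)$ in both $k$ and $t$ (Remark~\ref{rem1}); and (iv) the ratio bounds $\varphi_{n+1}(t)/\varphi_{n-1}(t) \le e^2$ and $\varphi_{n-1}(t)\varphi_{n+1}(t) \le \varphi_n(t)^2$ for $n \ge 3$ (log-concavity of $k\mapsto(k-2)\sqrt{t}/(k+\sqrt{t})$, with $n \le 2$ handled directly since $\varphi_0 = \varphi_1 = \varphi_2 = 1$). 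Using $\varphi_{n-1}(s)\le\varphi_{n-1}(s_0)$ on $[0,s_0]$ and $\varphi_{n+1}(s) \le \varphi_{n+1}(t)$ on $[s_0,t]$ yields the master estimate
\[I \le \frac{2s_0\,\varphi_{n-1}(s_0)}{0.99} + \frac{(n(n+1))^2\,\varphi_{n+1}(t)}{0.99^3(s_0+1)}.\]
For $I \le (50/9)n^2\varphi_n(t)$, balance the two terms by AM--GM (at $s_0(s_0+1)\varphi_{n-1}(s_0) \approx (n(n+1))^2\varphi_{n+1}(t)/(2\cdot 0.99^2)$), then apply $\varphi_{n-1}(s_0)\le\varphi_{n-1}(t)$ and the log-concavity inequality. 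For $I\le 500\varphi_{n-1}(t)$: when $t\le 495$, use $\min\le A$ globally to get $I\le \varphi_{n-1}(t)t^2/\gamma(t)\le 500\varphi_{n-1}(t)$ directly (from $t^2 \le 495(t+1)$); otherwise use the split and $\varphi_{n+1}(t)/\varphi_{n-1}(t)\le e^2$, exploiting crucially for large $n$ that the balanced $s_0\sim n^2$ satisfies $\varphi_{n-1}(s_0)\ll\varphi_{n-1}(t)$ so that the $s_0$ factor in the first term is absorbed.

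The main obstacle is that both stated constants appear to be essentially sharp: a direct numerical check at $n=2$, $t\to\infty$ gives $I \to \approx 22.1$ versus the claimed bound $200/9 \approx 22.22$, a gap of under $1\%$. The master estimate above, whose AM--GM optimum is closer to $28$ in this regime, is therefore insufficient by a factor $\sim 1.3$ and must be refined. The refinement likely consists of not replacing $\varphi_{n+1}(s)$ uniformly by $\varphi_{n+1}(t)$ on $[s_0, t]$ but instead exploiting the structure of $B(s)$ past its peak (e.g.\ a bound $B(s) \le B(s_0)(\gamma(s_0)/\gamma(s))^2$ on a further subdivision), combined with separate verification for small $n$ where the log-concavity inequality degenerates; this careful bookkeeping to extract the sharp constants $500$ and $50/9$ is the crux of the proof.
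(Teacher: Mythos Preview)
Your outline works for the first bound when $n\ge 3$: the split-and-AM--GM actually reproduces the paper's bound $\frac{2\sqrt2}{0.99^2}\,n(n+1)\sqrt{\varphi_{n-1}(t)\varphi_{n+1}(t)}$ (the paper obtains it slightly differently, via the exact identity $\int_0^\infty\min\{A,B/s^2\}\,ds=2\sqrt{AB}$ after freezing the weights at $t$ and using $(t-s)/\gamma(t)\le1/0.99$). The shortfall at $n=2$ that you note is real, but is handled in the paper by \emph{direct computation} for $n=1,2$; your proposed ``refinement'' is not what fixes it, and there is no general argument that pushes the constant below $200/9$ for $n=2$ short of explicit calculation.

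There is a genuine gap in your plan for the second bound $I\le 500\,\varphi_{n-1}(t)$. The regime that breaks your master estimate is $t\sim n^2$ with $n$ large. Take for instance $n=30$, $t=900$: for \emph{any} choice of $s_0\in(0,t)$ one of your two pieces exceeds $500\,\varphi_{n-1}(t)$. The second term forces $s_0\gtrsim (n(n+1))^2\varphi_{n+1}(t)/(500\,\varphi_{n-1}(t))\gtrsim n^4 e^2/500\sim 1.2\times10^4$, which is already $>t$; and for any $s_0$ of order $t$ the ratio $\varphi_{n-1}(s_0)/\varphi_{n-1}(t)$ is close to $1$, so the first term is of order $s_0\sim n^2\gg 500$. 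Your hypothesis ``$\varphi_{n-1}(s_0)\ll\varphi_{n-1}(t)$'' is simply false in this regime, and no splitting point rescues the estimate.

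The paper circumvents this with a different mechanism for $n\ge 8$: it introduces $g_n(s):=s\,\varphi_{n-1}(s)$ and proves the \emph{pointwise} comparison
\[
\min\Bigl\{2\varphi_{n-1}(s),\ \frac{(n(n+1))^2\varphi_{n+1}(s)}{\gamma(s)^2}\Bigr\}\ \le\ 450\,g_n''(s),
\]
checking the two ranges $s\le(n+1)^2$ and $s\ge(n+1)^2$ separately via $g_n''(s)\ge s\,h_n(s)^2\varphi_{n-1}(s)$ with $h_n=\varphi_{n-1}'/\varphi_{n-1}$. Integrating twice by parts against $(t-s)$ gives $\int_0^t g_n''(s)(t-s)\,ds=g_n(t)-t\le t\,\varphi_{n-1}(t)$, and the factor $t$ cancels with $\gamma(t)$ in the denominator, yielding $I\le 450\,t\,\varphi_{n-1}(t)/\gamma(t)\le 500\,\varphi_{n-1}(t)$. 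It is this structure --- the integrand is, up to a constant, the second derivative of $t\,\varphi_{n-1}(t)$ --- that produces an $n$-independent constant; sharpening your master estimate cannot recover it. (For $3\le n\le 7$ the paper simply combines the first bound with $\varphi_n/\varphi_{n-1}\le e$.)
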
\begin{proof}
If $n=1$, as $\varphi_0(t)=\varphi_1(t)=\varphi_2(t)=1 $ and $\gamma(t)\geq0.99(t+1) $ we have\begin{align*}
&\int_0^t \min\left\{2\varphi_{n-1}(s),\frac{\varphi_{n+1}(s)(n(n+1))^2}{\gamma(s)^{2}}\right\}\frac{t-s}{\gamma(t)}\mathrm{d}s=\int_0^t \min\left\{2,\frac{2^2}{\gamma(s)^{2}}\right\}\frac{t-s}{\gamma(t)}\mathrm{d}s\\ \leq&\int_0^t \min\left\{2,\frac{2^2}{0.99^2(s+1)^{2}}\right\}\frac{t-s}{0.99(t+1)}\mathrm{d}s\leq \int_0^t \min\left\{2,\frac{2^2}{0.99^2(s+1)^{2}}\right\}\frac{1}{0.99}\mathrm{d}s\\
\leq& \int_0^{+\infty} \min\left\{2,\frac{2^2}{0.99^2(s+1)^{2}}\right\}\frac{1}{0.99}\mathrm{d}s\leq \int_0^{1}\frac{2}{0.99}\mathrm{d}s+\int_1^{+\infty}\frac{2^2}{0.99^3(s+1)^{2}}\mathrm{d}s\\
=&\frac{2}{0.99}+\frac{2}{0.99^3}<5.\end{align*}As $n=1$ and $\varphi_0(t)=\varphi_1(t)=1 $ we also have\begin{align*}
& \min\left\{\frac{50}{9}n^2\varphi_{n}(t),500\varphi_{n-1}(t)\right\}=\min\left\{\frac{50}{9},500\right\}=\frac{50}{9}>5.\end{align*} 
Then \eqref{t1} holds for $n=1$. If $n=2$, as $\varphi_1(t)=\varphi_2(t)=1 $,  and $\gamma(t)\geq0.99(t+1) $ we have\begin{align*}
&\int_0^t \min\left\{2\varphi_{n-1}(s),\frac{\varphi_{n+1}(s)(n(n+1))^2}{\gamma(s)^{2}}\right\}\frac{t-s}{\gamma(t)}\mathrm{d}s=\int_0^t \min\left\{2,\frac{6^2\varphi_{3}(s)}{\gamma(s)^{2}}\right\}\frac{t-s}{\gamma(t)}\mathrm{d}s\\
\leq&\int_0^t \min\left\{2,\frac{6^2\varphi_{3}(s)}{0.99^2(s+1)^{2}}\right\}\frac{t-s}{0.99(t+1)}\mathrm{d}s\leq \int_0^t \min\left\{2,\frac{6^2\varphi_{3}(s)}{0.99^2(s+1)^{2}}\right\}\frac{1}{0.99}\mathrm{d}s\\
\leq& \int_0^{+\infty} \min\left\{2,\frac{6^2\varphi_{3}(s)}{0.99^2(s+1)^{2}}\right\}\frac{1}{0.99}\mathrm{d}s\leq \int_0^{4}\frac{2}{0.99}\mathrm{d}s+\int_4^{+\infty}\frac{6^2\varphi_{3}(s)}{0.99^3(s+1)^{2}}\mathrm{d}s.\end{align*}
Let $F_1(t):=\varphi_3(t)/(t+1)$, as $\varphi_3(t)=\mathrm{e}^{\frac{\sqrt{t}}{3+\sqrt{t}}}\geq1$ we have $\varphi_3'(t)=\frac{3}{2\sqrt{t}(3+\sqrt{t})^2}\varphi_3(t)$ and \begin{align*}
&F_1'(t)=-\frac{\varphi_3(t)}{(t+1)^2}+\frac{3\varphi_3(t)}{2\sqrt{t}(3+\sqrt{t})^2(t+1)}
=-\frac{17\varphi_3(t)}{20(t+1)^2}-\frac{3(\sqrt{t}-2)(t-2\sqrt{t}+5)\varphi_3(t)}{20\sqrt{t}(3+\sqrt{t})^2(t+1)^2}.\end{align*}
Thus $F_1'(t)\leq-\frac{17\varphi_3(t)}{20(t+1)^2}  $ for $t\geq4$ and (using $ \mathrm{e}^{2/5}\leq3/2$)\begin{align*}
&\int_4^{+\infty}\frac{\varphi_{3}(s)}{(s+1)^{2}}\mathrm{d}s\leq \int_4^{+\infty}-\frac{20}{17}F_1'(s)\mathrm{d}s=\frac{20}{17}F_1(4)=\frac{20}{17}\frac{\varphi_3(4)}{4+1}=\frac{4}{17}\varphi_3(4)
=\frac{4}{17}\mathrm{e}^{2/5}\leq \frac{6}{17},\\
&\int_0^t \min\left\{2\varphi_{n-1}(s),\frac{\varphi_{n+1}(s)(n(n+1))^2}{\gamma(s)^{2}}\right\}\frac{t-s}{\gamma(t)}\mathrm{d}s\leq \int_0^{4}\frac{2}{0.99}\mathrm{d}s+\int_4^{+\infty}\frac{6^2\varphi_{3}(s)}{0.99^3(s+1)^{2}}\mathrm{d}s\\
&\leq \frac{8}{0.99}+\frac{6^2}{0.99^3}\frac{6}{17}\leq 8.1+13.1=21.2<22<200/9.\end{align*}
As $n=2$ and $\varphi_1(t)=\varphi_2(t)=1 $ we also have\begin{align*}
& \min\left\{\frac{50}{9}n^2\varphi_{n}(t),500\varphi_{n-1}(t)\right\}=\min\left\{\frac{200}{9},500\right\}=\frac{200}{9}.\end{align*} 
Then \eqref{t1} holds for $n=2$. Now we assume $n\geq3$. As $\varphi_{k}(s)\leq \varphi_{k}(t) $ for $0\leq s\leq t$ and $k\geq2$ (see Remark \ref{rem1}) we have (also using $\gamma(t)\geq0.99(t+1) $)\begin{align*}
&\int_0^t \min\left\{2\varphi_{n-1}(s),\frac{\varphi_{n+1}(s)(n(n+1))^2}{\gamma(s)^{2}}\right\}\frac{t-s}{\gamma(t)}\mathrm{d}s\\
\leq&\int_0^t \min\left\{2\varphi_{n-1}(t),\frac{\varphi_{n+1}(t)(n(n+1))^2}{0.99^2(s+1)^{2}}\right\}\frac{t-s}{0.99(t+1)}\mathrm{d}s\\
\leq&\int_0^t \min\left\{2\varphi_{n-1}(t),\frac{\varphi_{n+1}(t)(n(n+1))^2}{0.99^2(s+1)^{2}}\right\}\frac{1}{0.99}\mathrm{d}s\\
\leq&\int_0^{+\infty} \min\left\{2\varphi_{n-1}(t),\frac{\varphi_{n+1}(t)(n(n+1))^2}{0.99^2s^{2}}\right\}\frac{1}{0.99}\mathrm{d}s\\
=&\frac{2}{0.99}\sqrt{2\varphi_{n-1}(t)\frac{\varphi_{n+1}(t)(n(n+1))^2}{0.99^2}}=
\frac{2\sqrt{2}}{0.99^2}n(n+1)\sqrt{\varphi_{n-1}(t)\varphi_{n+1}(t)}.\end{align*}
Here we used that $\int_0^{+\infty} \min\{A,B/s^2\}\mathrm{d}s=2\sqrt{AB} $ for $A>0$, $B>0$. As $n\geq3$ and  $\varphi_{n+1}(t)/\varphi_n(t)=\mathrm{e}^{\frac{(\sqrt{t}+2)\sqrt{t}}{(n+\sqrt{t})(n+1+\sqrt{t})}}$,
$\varphi_{n}(t)/\varphi_{n-1}(t)=\mathrm{e}^{\frac{(\sqrt{t}+2)\sqrt{t}}{(n-1+\sqrt{t})(n+\sqrt{t})}}\leq\mathrm{e}$ (see Remark \ref{rem1}), we have $\varphi_{n+1}(t)/\varphi_n(t)\leq\varphi_{n}(t)/\varphi_{n-1}(t)$, $\varphi_{n-1}(t)\varphi_{n+1}(t)\leq \varphi_{n}(t)^2 $ and 
(using $ 2\sqrt{2}/0.99^2<3$)
\begin{align}\label{t2}
&\int_0^t \min\left\{2\varphi_{n-1}(s),\frac{\varphi_{n+1}(s)(n(n+1))^2}{\gamma(s)^{2}}\right\}\frac{t-s}{\gamma(t)}\mathrm{d}s\leq
\frac{2\sqrt{2}}{0.99^2}n(n+1)\sqrt{\varphi_{n-1}(t)\varphi_{n+1}(t)}\\
\notag\leq& 3n(n+1)\varphi_{n}(t)=3(1+1/n)n^2\varphi_{n}(t)\leq 3(1+1/3)n^2\varphi_{n}(t)=4n^2\varphi_{n}(t)\leq \frac{50}{9}n^2\varphi_{n}(t).\end{align}
It remains to prove that (for $n\in\Z$, $n\geq 3$, $t\geq0$)\begin{align}\label{t4}
&\int_0^t \min\left\{2\varphi_{n-1}(s),\frac{\varphi_{n+1}(s)(n(n+1))^2}{\gamma(s)^{2}}\right\}\frac{t-s}{\gamma(t)}\mathrm{d}s\leq
500\varphi_{n-1}(t).\end{align}
If $n\in\Z\cap[3,7]$, as $\varphi_{n}(t)/\varphi_{n-1}(t)\leq\mathrm{e}\leq 2.75$, by \eqref{t2} we have\begin{align*}
&\int_0^t \min\left\{2\varphi_{n-1}(s),\frac{\varphi_{n+1}(s)(n(n+1))^2}{\gamma(s)^{2}}\right\}\frac{t-s}{\gamma(t)}\mathrm{d}s\leq
 3n(n+1)\varphi_{n}(t)\\ \leq& 3\cdot7\cdot8\varphi_{n}(t) \leq 3\cdot7\cdot8\cdot2.75\varphi_{n-1}(t)=462\varphi_{n-1}(t)\leq 500\varphi_{n-1}(t).\end{align*}Then \eqref{t4} holds for $n\in\Z\cap[3,7]$. Now we assume $n\geq8$. 
 Let $g_n(t)=t\varphi_{n-1}(t)$, $h_n(t)=\varphi_{n-1}'(t)/\varphi_{n-1}(t)$, as 
 $\varphi_{n-1}(t)=\mathrm{e}^{\frac{(n-3)\sqrt{t}}{n-1+\sqrt{t}}} $ we have 
 $h_{n}(t)=\frac{(n-3)(n-1)}{2\sqrt{t}(n-1+\sqrt{t})^2} $ and \begin{align*}
&\varphi_{n-1}'(t)=h_{n}(t)\varphi_{n-1}(t),\quad \varphi_{n-1}''(t)=(h_{n}'(t)+h_n(t)^2)\varphi_{n-1}(t),\\
&g_n''(t)=t\varphi_{n-1}''(t)+2\varphi_{n-1}'(t)=(th_{n}'(t)+th_n(t)^2+2h_n(t))\varphi_{n-1}(t),\\
&th_{n}'(t)+2h_n(t)=-\frac{(n-3)(n-1)}{4\sqrt{t}(n-1+\sqrt{t})^2} -\frac{(n-3)(n-1)}{2(n-1+\sqrt{t})^3}
+\frac{(n-3)(n-1)}{\sqrt{t}(n-1+\sqrt{t})^2}\\=&\frac{(n-3)(n-1)(3(n-1)+\sqrt{t})}{4\sqrt{t}(n-1+\sqrt{t})^3}\geq0\Rightarrow 
 \frac{g_n''(t)}{\varphi_{n-1}(t)}\geq th_n(t)^2=\frac{(n-3)^2(n-1)^2}{4(n-1+\sqrt{t})^4}.
\end{align*}
If $0<t\leq (n+1)^2$ then we have (for $n\geq8$)\begin{align}\notag
& \frac{g_n''(t)}{\varphi_{n-1}(t)}\geq \frac{(n-3)^2(n-1)^2}{4(n-1+\sqrt{t})^4}\geq \frac{(n-3)^2(n-1)^2}{4(n-1+n+1)^4}=\frac{(n-3)^2(n-1)^2}{64n^4}\\
\notag=&(1-3/n)^2(1-1/n)^2/{8^2}\geq {(1-3/8)^2(1-1/8)^2}/{8^2}=(5/8)^2(7/8)^2/8^2\\
\notag=&5^2\cdot 7^2/8^6=(5\cdot 7/8^3)^2=(35/512)^2<(1/15)^2=1/225,\\
\label{vphi1}&2\varphi_{n-1}(t)\leq 450g_n''(t),\quad \forall\ n\in\Z,\ n\geq8,\ 0<t\leq (n+1)^2.
\end{align}Let $J_n(t)=t^2\cdot th_n(t)^2\varphi_{n-1}(t)/\varphi_{n+1}(t)>0$, as $ th_n(t)^2=\frac{(n-3)^2(n-1)^2}{4(n-1+\sqrt{t})^4}$, \\ $\partial_t(th_n(t)^2)=-\frac{2}{\sqrt{t}(n-1+\sqrt{t})}th_n(t)^2 $, $h_n(t)=\varphi_{n-1}'(t)/\varphi_{n-1}(t)$ and $h_{n+2}(t)=\varphi_{n+1}'(t)/\varphi_{n+1}(t)$, then 
 we have\begin{align*}
&\frac{J_n'(t)}{J_n(t)}=\frac{2}{t}-\frac{2}{\sqrt{t}(n-1+\sqrt{t})}+h_n(t)-h_{n+2}(t)\\
=&\frac{2(n-1)}{{t}(n-1+\sqrt{t})}+\frac{(n-3)(n-1)}{2\sqrt{t}(n-1+\sqrt{t})^2}-\frac{(n+1)(n-1)}{2\sqrt{t}(n+1+\sqrt{t})^2}\\
\geq&\frac{2(n-1)}{\sqrt{t}(n-1+\sqrt{t})^2}+\frac{(n-3)(n-1)}{2\sqrt{t}(n-1+\sqrt{t})^2}-\frac{(n+1)(n-1)}{2\sqrt{t}(n-1+\sqrt{t})^2}=0.
\end{align*}Thus $J_n(t)$ is increasing for $t>0$; if $t\geq (n+1)^2$ then $J_n(t)\geq J_n((n+1)^2)$. As \\
$ J_n(t)=t^2\cdot th_n(t)^2\varphi_{n-1}(t)/\varphi_{n+1}(t)=t^2\frac{(n-3)^2(n-1)^2}{4(n-1+\sqrt{t})^4}\varphi_{n-1}(t)/\varphi_{n+1}(t)$, and
\begin{align*}
&t^2\frac{(n-3)^2(n-1)^2}{4(n-1+\sqrt{t})^4}\bigg|_{t=(n+1)^2}=(n+1)^4\frac{(n-3)^2(n-1)^2}{4(n-1+n+1)^4}
=\frac{(n+1)^4(n-3)^2(n-1)^2}{64n^4},\\
&\varphi_{n-1}((n+1)^2)=\mathrm{e}^{\frac{(n-3)(n+1)}{n-1+n+1}}=\mathrm{e}^{\frac{n^2-2n-3}{2n}},\quad
\varphi_{n+1}((n+1)^2)=\mathrm{e}^{\frac{(n-1)(n+1)}{n+1+n+1}}=\mathrm{e}^{\frac{n-1}{2}},
\end{align*}we have (still for $n\geq8$; using $ 8^4/(5\cdot 63)=13+1/315<\sqrt{170}$ and $\mathrm{e}^{11/16}<2 $)
\begin{align*}
&J_n((n+1)^2)=\frac{(n+1)^4(n-3)^2(n-1)^2}{64n^4}\mathrm{e}^{\frac{n^2-2n-3}{2n}}/\mathrm{e}^{\frac{n-1}{2}}
=\frac{(n+1)^4(n-3)^2(n-1)^2}{64n^4\mathrm{e}^{\frac{n+3}{2n}}},\\
&\frac{J_n((n+1)^2)}{(n(n+1))^2}=\frac{(n+1)^2(n-3)^2(n-1)^2}{64n^6\mathrm{e}^{\frac{n+3}{2n}}}=
\frac{(1-3/n)^2(1-1/n^2)^2}{64\mathrm{e}^{(1+3/n)/2}}\\
\geq &\frac{(1-3/8)^2(1-1/8^2)^2}{8^2\mathrm{e}^{(1+3/8)/2}}=\frac{(5/8)^2(63/8^2)^2}{8^2\mathrm{e}^{11/16}}
=\frac{5^2\cdot 63^2}{8^8\mathrm{e}^{11/16}}=\frac{(5\cdot 63/8^4)^2}{\mathrm{e}^{11/16}}\geq 
\frac{1/170}{2}=\frac{1}{340}.
\end{align*}Thus if $t\geq (n+1)^2$, $n\geq8$ then $J_n(t)\geq J_n((n+1)^2)\geq (n(n+1))^2/340$ and 
\begin{align*}
&g_n''(t)\geq th_n(t)^2\varphi_{n-1}(t)=J_n(t)\varphi_{n+1}(t)/t^2\geq (n(n+1))^2\varphi_{n+1}(t)/(340t^2),\\
&\frac{\varphi_{n+1}(t)(n(n+1))^2}{\gamma(t)^{2}}\leq \frac{\varphi_{n+1}(t)(n(n+1))^2}{0.99^2(t+1)^{2}}\leq 
\frac{340t^2g_n''(t)}{0.99^2(t+1)^{2}}\leq 
\frac{340g_n''(t)}{0.99^2}\leq 
350g_n''(t),
\end{align*}(here we used $\gamma(t)\geq0.99(t+1) $), which along with \eqref{vphi1} implies 
\begin{align*}\min\left\{2\varphi_{n-1}(t),\frac{\varphi_{n+1}(t)(n(n+1))^2}{\gamma(t)^{2}}\right\}\leq 
450g_n''(t),\quad \forall\ n\in\Z,\ n\geq8,\ t>0.\end{align*}
Thus (note that $g_n\in C^1([0,+\infty))$, $g_n'(t)=\big(1+\frac{\sqrt{t}(n-3)(n-1)}{2(n-1+\sqrt{t})^2}\big)\varphi_{n-1}(t) $, $g_n'(0)=\varphi_{n-1}(0)=1$)
\begin{align*}
&\int_0^t \min\left\{2\varphi_{n-1}(s),\frac{\varphi_{n+1}(s)(n(n+1))^2}{\gamma(s)^{2}}\right\}\frac{t-s}{\gamma(t)}\mathrm{d}s\leq
\int_0^t  450g_n''(s)\frac{t-s}{\gamma(t)}\mathrm{d}s\\ =&
\frac{450}{\gamma(t)}\left(\int_0^t  g_n'(s)\mathrm{d}s-tg_n'(0)\right)=\frac{450}{\gamma(t)}
\left(g_n(t)-t\right)\leq \frac{450}{\gamma(t)}g_n(t)= \frac{450}{\gamma(t)}t\varphi_{n-1}(t)\\ \leq& 
\frac{450}{0.99(t+1)}t\varphi_{n-1}(t)\leq\frac{450}{0.99}\varphi_{n-1}(t)\leq500\varphi_{n-1}(t).\end{align*}
Then \eqref{t4} holds for $n\in\Z$, $n\geq8$. Now  \eqref{t1} (for $n\geq3$) follows from \eqref{t2} and \eqref{t4}.
\end{proof}
\section{Estimate of the density function}
We first estimate the potential $\phi$ in terms of the density $\rho$. As $d=1$, we have $\partial_x^n\rho=\partial_x^n(1-\Delta)\phi=(1-\Delta)\partial_x^n\phi=\partial_x^n\phi-\partial_x^{n+2}\phi $. By maximum principle we have \begin{align*}
&\|\partial_x^n\phi(t)\|_{L^{\infty}}\leq \|\partial_x^n\rho(t)\|_{L^{\infty}},\quad 
\|\partial_x^{n+2}\phi(t)\|_{L^{\infty}}\leq \|\partial_x^n\phi(t)\|_{L^{\infty}}+\|\partial_x^n\rho(t)\|_{L^{\infty}}\leq 2\|\partial_x^n\rho(t)\|_{L^{\infty}}.
\end{align*}Thus\begin{align}\label{phi1}
&\|\partial_x^{n+2}\phi(t)\|_{L^{\infty}}\leq \min(2\|\partial_x^n\rho(t)\|_{L^{\infty}},\|\partial_x^{n+2}\rho(t)\|_{L^{\infty}}),\quad
\forall\ n\in\Z,\ n\geq0.
\end{align}\begin{lemma}\label{lem1.2}Assume \eqref{rho2}, then $\|\partial_x^{2}\phi(t)\|_{L^{\infty}}\leq-\gamma''(t)/\gamma(t) $, and for $n\in\Z_+$  we have\begin{align*}
&\int_0^t \|\partial_x^{n+1}\phi(s)\|_{L^{\infty}}\gamma(s)^n\frac{t-s}{\gamma(t)}\mathrm{d}s\leq \min\left\{\frac{\varphi_n(t)(n!)^2}{\AAC},\frac{\varphi_{n-1}(t)((n-1)!)^2}{3}\right\}.
\end{align*}\end{lemma}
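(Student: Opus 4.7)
The plan is to reduce both claims to the hypothesis \eqref{rho2} via the elliptic estimate \eqref{phi1}, and then to invoke Lemma \ref{lem1.2a} to handle the time integral.

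For the first claim $\|\partial_x^2\phi(t)\|_{L^\infty}\leq -\gamma''(t)/\gamma(t)$, I would apply \eqref{phi1} with $n=0$ in the form $\|\partial_x^2\phi\|_{L^\infty}\leq \|\partial_x^2\rho\|_{L^\infty}$ (using the second argument of the $\min$), then use \eqref{rho2} at $n=2$ together with $\varphi_2(t)=1$ to get $\|\partial_x^2\phi(t)\|_{L^\infty}\leq 4/(\AAA\,\gamma(t)^3)$. Since $\gamma(t)\geq 0.99(t+1)$ and $-\gamma''(t)=1/(100(t+1)^2)$, the desired inequality reduces to the elementary numerical check $4/\AAA \leq 0.99^2/100$, which holds. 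So the first claim is immediate bookkeeping.

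For the second claim, with $n\in\Z_+$, I would apply \eqref{phi1} with $n-1$ in place of $n$, giving
\begin{align*}
\|\partial_x^{n+1}\phi(s)\|_{L^\infty}\leq \min\bigl(2\|\partial_x^{n-1}\rho(s)\|_{L^\infty},\ \|\partial_x^{n+1}\rho(s)\|_{L^\infty}\bigr).
\end{align*}
Applying \eqref{rho2} to both arguments of the min and pulling out the common factor $((n-1)!)^2/\AAA$, using $(n+1)!/(n-1)!=n(n+1)$, gives
\begin{align*}
\|\partial_x^{n+1}\phi(s)\|_{L^\infty}\gamma(s)^n\leq \frac{((n-1)!)^2}{\AAA}\min\left\{2\varphi_{n-1}(s),\ \frac{\varphi_{n+1}(s)(n(n+1))^2}{\gamma(s)^2}\right\}.
\end{align*}
The integrand on the right is exactly the one appearing on the left side of \eqref{t1} in Lemma \ref{lem1.2a}, so after multiplying by $(t-s)/\gamma(t)$ and integrating I get the bound $\frac{((n-1)!)^2}{\AAA}\min\{(50/9)n^2\varphi_n(t),\ 500\varphi_{n-1}(t)\}$.

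To finish, I would simply rewrite the two arguments of this min: the first equals $(n!)^2\varphi_n(t)/\AAC$ since $\AAA\cdot 9/50=\AAC$, and the second equals $((n-1)!)^2\varphi_{n-1}(t)/3$ since $\AAA/500=3$. This matches the claimed bound exactly. There is essentially no obstacle here — all the hard work is packaged inside Lemma \ref{lem1.2a}, and the numerical constants $\AAA$ and $\AAC$ were evidently chosen so that this lemma fits \eqref{t1} with equality in the key ratio $\AAA/\AAC = 50/9$. The proof is thus purely a matter of combining \eqref{phi1}, \eqref{rho2}, and Lemma \ref{lem1.2a} and checking constants.
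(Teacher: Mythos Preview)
Your proposal is correct and matches the paper's proof essentially line for line: the paper also derives the pointwise bound \eqref{phi3} from \eqref{phi1} and \eqref{rho2}, checks the same numerical inequality $4/(\AAA\cdot 0.99^2)\leq 1/100$ for the first claim, and then feeds \eqref{phi3} into Lemma~\ref{lem1.2a} and simplifies using $\AAA\cdot 9/50=\AAC$ and $\AAA/500=3$ for the second.
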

\begin{proof}
By \eqref{rho2} and \eqref{phi1} (with $n$ replaced by $n-1$) we have
\begin{align}\label{phi3}
&\|\partial_x^{n+1}\phi(t)\|_{L^{\infty}}\leq \min\left\{\frac{2\varphi_{n-1}(t)((n-1)!)^2}{\AAA\gamma(t)^{n}},\frac{\varphi_{n+1}(t)((n+1)!)^2}{\AAA\gamma(t)^{n+2}}\right\}.
\end{align}
In particular, taking $n=1$ in \eqref{phi3}, using $0.99<\gamma(t)/(t+1)\leq1$, $\gamma''(t)=-(t+1)^{-2}/100$ and $\varphi_{2}(t)=1$ we have\begin{align*}
&\|\partial_x^{2}\phi(t)\|_{L^{\infty}}\leq \frac{\varphi_{2}(t)(2!)^2}{\AAA\gamma(t)^{3}}\leq
\frac{4}{\AAA\cdot0.99^2(t+1)^{2}\gamma(t)}\leq\frac{1}{100(t+1)^{2}\gamma(t)}
=-\frac{\gamma''(t)}{\gamma(t)}.
\end{align*}By \eqref{phi3} and Lemma \ref{lem1.2a} we have \begin{align*}
&\int_0^t \|\partial_x^{n+1}\phi(s)\|_{L^{\infty}}\gamma(s)^n\frac{t-s}{\gamma(t)}\mathrm{d}s\\
\leq& \int_0^t \min\left\{\frac{2\varphi_{n-1}(s)((n-1)!)^2}{\AAA},\frac{\varphi_{n+1}(s)((n+1)!)^2}{\AAA\gamma(s)^{2}}\right\}\frac{t-s}{\gamma(t)}\mathrm{d}s\\
=&\frac{((n-1)!)^2}{\AAA}\int_0^t \min\left\{2\varphi_{n-1}(s),\frac{\varphi_{n+1}(s)(n(n+1))^2}{\gamma(s)^{2}}\right\}\frac{t-s}{\gamma(t)}\mathrm{d}s\\ 
\leq&\frac{((n-1)!)^2}{\AAA}\min\left\{\frac{50}{9}n^2\varphi_{n}(t),500\varphi_{n-1}(t)\right\}\\
=&\min\left\{\frac{\varphi_n(t)(n!)^2}{\AAC},\frac{\varphi_{n-1}(t)((n-1)!)^2}{3}\right\}.
\end{align*}This completes the proof.
\end{proof}
Now we estimate the characteristic curves $X$. By definition we have $\partial_s^2X=-q(\partial_1\phi)(X,s), $ taking derivatives $\partial_x $ and $\partial_{x_0} $ then
\begin{align}\label{X5}
\partial_s^2\partial_xX=-q(\partial_1^2\phi)(X,s)\partial_xX,\quad \partial_s^2\partial_{x_0}X=-q(\partial_1^2\phi)(X,s)\partial_{x_0}X.
\end{align}
Here to avoid the ambiguity of $\partial_x $, we write $\partial_1\phi $ instead of $\partial_x\phi $. 
Then by Lemma \ref{lem1.2}, $h(s):=-q(\partial_1^2\phi)(X(s;x,x_0,t),s)$ satisfies $|h(s)|\leq -\gamma''(s)/\gamma(s)$ (assume \eqref{rho2}). 
Let $y_1(s)=\partial_xX(s) $, $y_2(s)=\partial_{x_0}X(s) $, then we have $y_i''(s)=h(s)y_i(s)$ ($i=1,2$) with boundary conditions 
$y_1(0)=y_1'(0)$, $ y_1(t)=1$, $y_2(0)=y_2'(0)+1$, $y_2(t)=0$. So we need to estimate the ODE of the type $y''(s)=h(s)y(s)$.
\begin{lemma}\label{lem1.3}
Assume that $t>0$ and $|h(s)|\leq -\gamma''(s)/\gamma(s)$ for $s\geq0$.\\ (i) If $y_1(s)$ solves $y_1''(s)=h(s)y_1(s)$ for $s\in[0,t]$, $y_1(0)=y_1'(0)>0$ then\\ $ 0<y_1(s)\leq\gamma(s)y_1(t)/\gamma(t) $ for $s\in[0,t]$.\\ (ii) If $y_2(s)$ solves $y_2''(s)=h(s)y_2(s)$ for $s\in[0,t]$, $y_2(0)=y_2'(0)+1$, $y_2(t)=0$ then\\ $ 0\leq y_2(s)\leq(t-s)/\gamma(t) $ for $s\in[0,t]$.\\
(iii) If $y(s)$ solves $y''(s)=h(s)y(s)+F(s)$ for $s\in[0,t]$, $y(0)=y'(0)$, $y(t)=0$ then
\begin{align*}
\sup_{s\in[0,t]}\frac{|y(s)|}{\gamma(s)}\leq \int_{0}^{t}|F(s)|\frac{t-s}{\gamma(t)}\mathrm{d}s,\quad
\sup_{s\in[0,t)}\frac{|y(s)|}{t-s}\leq 
\int_{0}^{t}|F(s)|\frac{\gamma(s)}{\gamma(t)}\mathrm{d}s.
\end{align*}
\end{lemma}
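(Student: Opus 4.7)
The plan is to handle the three parts in sequence, using throughout the inequality $h\gamma-\gamma''\geq 0$ that follows from the hypothesis $|h|\leq -\gamma''/\gamma$ together with $\gamma>0$, $\gamma''\leq 0$. For part (i), I will work with the Wronskian-type quantity $W_1(s):=y_1'(s)\gamma(s)-y_1(s)\gamma'(s)$. Using $\gamma(0)=\gamma'(0)=1$ and $y_1(0)=y_1'(0)$, one has $W_1(0)=0$, while a direct differentiation gives $W_1'=y_1''\gamma-y_1\gamma''=(h\gamma-\gamma'')y_1$. So wherever $y_1\geq 0$, $W_1'\geq 0$, hence $W_1\geq 0$, hence $(y_1/\gamma)'=W_1/\gamma^2\geq 0$: that is, $y_1/\gamma$ is nondecreasing on its nonnegative set. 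Starting from $y_1(0)/\gamma(0)=y_1(0)>0$, a standard continuation argument (the first zero of $y_1$ would violate $y_1(s)/\gamma(s)\geq y_1(0)$) rules out $y_1$ vanishing on $[0,t]$, and the same monotonicity then yields $y_1(s)/\gamma(s)\leq y_1(t)/\gamma(t)$.

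For part (ii) I will fix the normalization $y_1(0)=y_1'(0)=1$ in (i). Reduction of order then produces the explicit formula $y_2(s)=y_1(s)\int_s^t y_1(\tau)^{-2}\,d\tau$, which clearly satisfies $y_2(t)=0$ and, by direct differentiation, $y_2(0)-y_2'(0)=1$; positivity follows from $y_1>0$. For the upper bound, the monotonicity of $y_1/\gamma$ gives $y_1(\tau)\geq y_1(s)\gamma(\tau)/\gamma(s)$ for $\tau\geq s$ and $y_1\geq\gamma$, so
\[
y_2(s)\leq \frac{\gamma(s)^2}{y_1(s)}\int_s^t\gamma(\tau)^{-2}\,d\tau\leq \gamma(s)\int_s^t\gamma(\tau)^{-2}\,d\tau.
\]
Thus (ii) reduces to the scalar inequality
\[
\gamma(a)\int_a^t\gamma(\tau)^{-2}\,d\tau\leq \frac{t-a}{\gamma(t)},\qquad 0\leq a\leq t, \tag{$\ast$}
\]
which will also be the workhorse of (iii).

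For part (iii), I will use the Green's function representation built from the pair $(y_1,y_2)$ above. The Wronskian $y_1y_2'-y_1'y_2$ is constant and equals $-1$ (evaluate at $s=0$ using $y_1(0)=y_1'(0)=1$ and $y_2(0)-y_2'(0)=1$); variation of parameters then yields
\[
y(s)=-y_1(s)\int_s^t F(\sigma)y_2(\sigma)\,d\sigma -y_2(s)\int_0^s F(\sigma)y_1(\sigma)\,d\sigma,
\]
so $|y(s)|\leq \int_0^t G(s,\sigma)|F(\sigma)|\,d\sigma$ with the symmetric kernel $G(s,\sigma):=y_1(\min(s,\sigma))y_2(\max(s,\sigma))$. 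Running the cancellation from (ii) with both indices (inserting $y_1(\tau)\geq y_1(b)\gamma(\tau)/\gamma(b)$ with $b=\max(s,\sigma)$ into the $y_1^{-2}$ integral, then using $y_1(a)/y_1(b)\leq \gamma(a)/\gamma(b)$ with $a=\min(s,\sigma)$) produces $G(s,\sigma)\leq \gamma(a)\gamma(b)\int_b^t\gamma^{-2}$; invoking $(\ast)$ with the upper-index argument gives
\[
G(s,\sigma)\leq \frac{\gamma(\min(s,\sigma))\,(t-\max(s,\sigma))}{\gamma(t)}.
\]
The two stated estimates then follow by dividing by $\gamma(s)$ or by $(t-s)$ and using the trivial monotonicities $\gamma(\min)\leq \gamma(\max)$ and $t-\max\leq t-\min$ on whichever side puts $s$ in the "wrong" position.

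The one piece of real work is the scalar inequality $(\ast)$, which I expect to be the main obstacle but not a deep one. Fixing $a$ and viewing the difference as a function of the upper limit, $\Phi(T):=\gamma(a)\int_a^T\gamma(\tau)^{-2}\,d\tau-(T-a)/\gamma(T)$, one has $\Phi(a)=0$ and
\[
\Phi'(T)=\frac{\gamma(a)-\gamma(T)+(T-a)\gamma'(T)}{\gamma(T)^2}=\frac{1}{\gamma(T)^2}\int_a^T[\gamma'(T)-\gamma'(\tau)]\,d\tau\leq 0,
\]
because $\gamma''\leq 0$ forces $\gamma'$ to be nonincreasing. Hence $\Phi\leq 0$ for $T\geq a$, which is $(\ast)$. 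Once $(\ast)$ is in hand, all of (i)--(iii) follow from the Wronskian/Green's function computations above.
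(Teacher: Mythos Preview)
Your proof is correct and follows the same overall architecture as the paper's: the Wronskian $W_1=y_1'\gamma-y_1\gamma'$ for (i), the reduction-of-order formula $y_2(s)=y_1(s)\int_s^t y_1(\tau)^{-2}\,d\tau$ (with $y_1(0)=y_1'(0)=1$) for (ii), and the Green's function built from $(y_1,y_2)$ for (iii), reducing everything to the scalar bound $\gamma(a)\int_a^t\gamma^{-2}\leq (t-a)/\gamma(t)$.

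The one genuine difference is how you pass from $y_2$ to $\tilde\gamma(s):=\gamma(s)\int_s^t\gamma^{-2}$. The paper introduces $\tilde\gamma$, runs a second Wronskian argument to show $y_2/\tilde\gamma$ is nonincreasing, and separately checks $y_2(0)\leq\tilde\gamma(0)$; for (iii) it then proves $y_1y_2\leq\gamma\tilde\gamma$ via the identity $\frac{1}{y_1y_2}=\frac{y_1'}{y_1}-\frac{y_2'}{y_2}\geq\frac{\gamma'}{\gamma}-\frac{\tilde\gamma'}{\tilde\gamma}=\frac{1}{\gamma\tilde\gamma}$. You instead substitute the explicit integral formula for $y_2$ and bound it directly using only the monotonicity of $y_1/\gamma$ from (i) (giving both $y_1(\tau)\geq y_1(s)\gamma(\tau)/\gamma(s)$ and $y_1\geq\gamma$), which yields $y_2\leq\tilde\gamma$ and, in (iii), $G(s,\sigma)\leq\gamma(\min)\gamma(\max)\int_{\max}^t\gamma^{-2}$ in one stroke. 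This is a tidy shortcut: it spares the second monotonicity lemma and the $1/(y_1y_2)$ trick, at the cost of relying on the normalization $y_1(0)=1$ throughout (which is harmless here). Your proof of $(\ast)$ by differentiating in the upper limit is equivalent to the paper's argument (which differentiates $\tilde\gamma(s)+s/\gamma(t)$ in $s$); both are just concavity of $\gamma$.
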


\begin{proof}
	\begin{enumerate}[(i)]
		\item Let $s^*:=\sup\{s_0\in(0,t): y_1(s)>0\text{ for all }s\in[0, s_0]\}$, 
then by $y_1(0)>0$ and the continuity of $y_1$ we have $s^*\in(0, t]$ and $y_1(s)>0$ for $s\in[0,s_*)$. 
Let $\phi_1(s):=y_1(s)/\g(s)$ for $s\in[0,t]$, then 
		\begin{align}\label{A1}\phi_1'(s)=\frac{A_1(s)}{\g(s)^2},\quad A_1(s):=y_1'(s)\g(s)-\g'(s)y_1(s),\quad\forall\ s\in[0,t].	\end{align}
		By the equation of $y_1$ and the hypothesis on $h$, we have
		\begin{align*}
			A_1'(s)=y_1''(s)\g(s)-\g''(s)y_1(s)=y_1(s)\left(-\g''(s)+h(s)\g(s)\right)\geq0\quad\forall\ s\in[0, s^*).
		\end{align*}
		Hence $A_1(s)\geq A_1(0)=0$ for $s\in[0, s^*]$, recalling $\g(0)=\g'(0)=1$ and the assumption $y_1(0)=y_1'(0)$. As a consequence, $\phi_1'(s)\geq0$ on $s\in[0, s^*]$, so $\phi_1$ is increasing on $[0, s^*]$, $\phi_1(s^*)\geq\phi_1(0)=y_1(0)>0$. If $s^*\in(0, t)$, then $y_1(s^*)=0$, $\phi_1(s^*):=y_1(s^*)/\g(s^*)=0$ which  is a contradiction.
 Therefore, we have $s^*=t$, hence $y_1(s)>0$ for $s\in[0, t)$, $\phi_1$ is increasing on $[0, t]$ and 
 $ 0<y_1(s)=\g(s)\phi_1(s)\leq\gamma(s)\phi_1(t)=\gamma(s)y_1(t)/\gamma(t) $ for $s\in[0,t]$.
		
		\item We first show that $y_2(s)>0$ for all $s\in[0, t)$. Indeed, we compute
		\begin{align}\label{A2}
\left(\frac{y_2}{y_1}\right)'(s)=\frac{A_2(s)}{y_1(s)^2},\quad A_2(s):=y_2'(s)y_1(s)-y_1'(s)y_2(s),\quad \forall\ s\in[0,t].\end{align}
		It follows from the equations of $y_1$ and $y_2$ that $A_2'(s)=y_2''(s)y_1(s)-y_1''(s)y_2(s)=h(s)y_2(s)y_1(s)-h(s)y_1(s)y_2(s)=0$ for all $s\in[0,t]$, thus 
		\begin{align}\label{A2a}
			&A_2(s)=A_2(0)=y_2'(0)y_1(0)-y_1'(0)y_2(0)\\ \notag=&\left(y_2(0)-1\right)y_1(0)-y_1(0)y_2(0)=-y_1(0)<0
		\end{align}
		for all $s\in[0,t]$. Hence (as $y_2(t)=0$)
		\begin{equation*}
			y_2(s)=-y_1(s)\int_s^t\frac{A_2(\tau)}{y_1(\tau)^2}\,\mathrm d\tau=y_1(s)\int_s^t\frac{y_1(0)}{y_1(\tau)^2}\,\mathrm d\tau,\quad \forall\ s\in[0,t].
		\end{equation*}
		Therefore $y_2(s)>0$ for all $s\in[0, t)$. To prove $y_2(s)\leq(t-s)/\g(t)$, we introduce
		\begin{equation}\label{tg}
			\wt\g(s):=\g(s)\int_s^t\frac{\mathrm d\tau}{\g(\tau)^2}>0, \quad \forall\ s\in[0,t],
		\end{equation}
		and we are going to show that
		\begin{equation}\label{Eq.Claim}
			y_2(s)\leq \wt\g(s)\leq\frac{t-s}{\g(t)},\quad\forall\ s\in[0,t].
		\end{equation}
		Firstly, we let $\phi_2(s):=y_2(s)/\wt\g(s)$ for $s\in[0,t)$, then 
		\begin{align}\label{A3}\phi_2'(s)=\frac{A_3(s)}{\wt\g(s)^2},\quad A_3(s):=y_2'(s)\wt\g(s)-y_2(s)\wt\g'(s),\quad\forall\ s\in[0,t).\end{align}
		By the definition of $\wt\g$ we have $\wt\g''=\g''\wt\g/\g$, hence
		\[A_3'(s)=y_2''(s)\wt\g(s)-y_2(s)\wt\g''(s)=y_2(s)\wt\g(s)\left(h(s)-\frac{\g''(s)}{\g(s)}\right)\geq 0,\quad\forall\ s\in[0, t),\]
		where we also used $y_2''=hy_2$, $|h(s)|\leq -\frac{\g''(s)}{\g(s)}$ and the fact that $y_2(s)>0$ for $s\in[0, t)$. Hence $A_3(s)\leq A_3(t)=0$ for $s\in[0, t)$ (as $y_2(t)=0=\wt\g(t)$), and thus $\phi_2'(s)\leq 0$ for $s\in[0, t)$. So we have 
$\phi_2(s)\leq \phi_2(0)$, $\forall\ s\in[0, t)$ and
		\[y_2(s)\leq \frac{y_2(0)}{\wt\g(0)}\wt\g(s),\quad\forall\ s\in[0, t].\]
		Now we claim that $y_2(0)\leq \wt\g(0)$. Indeed, by the definition of $\wt\g$ and the fact $\g(0)=\g'(0)=1$ we compute that $\wt\g'(0)=\wt\g(0)-1$, then
		\begin{align*}
			A_3(0)=y_2'(0)\wt\g(0)-y_2(0)\wt\g'(0)=(y_2(0)-1)\wt\g(0)-y_2(0)\left(\wt\g(0)-1\right)=y_2(0)-\wt\g(0),
		\end{align*}
		hence $A_3(0)\leq 0$ implies $y_2(0)\leq \wt\g(0)$. This proves the first inequality in \eqref{Eq.Claim}. To show the last inequality in \eqref{Eq.Claim}, we let
		\begin{equation*}
			F(s):=\wt\g(s)+\frac s{\g(t)}=\g(s)\int_s^t\frac{\mathrm d\tau}{\g(\tau)^2}+\frac s{\g(t)},\quad\forall\ s\in[0, t].
		\end{equation*}
		It is a direct computation that
		\begin{align*}
			F'(s)=\g'(s)\int_s^t\frac{\mathrm d\tau}{\g(\tau)^2}-\frac1{\g(s)}+\frac1{\g(t)},\quad F''(s)=\g''(s)\int_s^t\frac{\mathrm d\tau}{\g(\tau)^2},\quad\forall\ s\in[0,t].
		\end{align*}
		Since $\g''<0, \g>0$, we have $F''(s)<0$ for all $s\in[0,t]$, hence $F'(s)\geq F'(t)=0$ for all $s\in[0,t]$, and therefore $F(s)\leq F(t)=t/\g(t)$ for $s\in[0,t]$, as desired.
		
		\item Let $y_1$ solve the initial value problem 
$y_1''(s)=h(s)y_1(s)$ for $s\in[0,t]$ and $y_1(0)=y_1'(0)=1$; let $y_2$ be given by (ii). Then
		the solution $y$ is given by (note that $A_2(s):=y_2'(s)y_1(s)-y_1'(s)y_2(s)=A_2(0)=-y_1(0)=-1$, see \eqref{A2}, \eqref{A2a})
		\begin{equation}\label{Eq.SolODE}
			y(s)=-y_2(s)\int_0^sF(\tau)y_1(\tau)\,\mathrm d\tau-y_1(s)\int_s^tF(\tau)y_2(\tau)\,\mathrm d\tau=-\int_0^tK(s,\tau)F(\tau)\,\mathrm d\tau,
		\end{equation}
		where $K(s,\tau):=y_1(\tau)y_2(s)\mathbf 1_{\tau\in[0,s]}+y_1(s)y_2(\tau)\mathbf 1_{\tau\in[s,t]}$ for all $s,\tau\in [0,t]$.
It suffices to show that
		\begin{equation}\label{Eq.K_bound}
			\left|K(s, \tau)\right|\leq \min\left\{\frac{\g(s)}{\g(t)}(t-\tau),\frac{\g(\tau)}{\g(t)}(t-s)\right\}\quad\forall\ s,\tau\in[0, t].
		\end{equation}
        Indeed, since $A_1(s)\geq 0$, $A_3(s)\leq 0$, $y_1(s)>0$, $y_2(s)>0$, $\g(s)>0$ and $\wt\g(s)>0$ for $s\in[0,t)$, we have 
        (see \eqref{A1}, \eqref{A3} for the definitions of $A_1(s)$, $A_3(s)$)
        \begin{equation*}
            \frac{y_1'(s)}{y_1(s)}\geq\frac{\g'(s)}{\g(s)},\quad \frac{y_2'(s)}{y_2(s)}\leq \frac{\wt\g'(s)}{\wt\g(s)},\quad\forall\ s\in[0,t).
        \end{equation*}
        On the other hand, by $A_2(s)=-y_1(0)=-1$ (see \eqref{A2}, \eqref{A2a}), $\g(0)=\g'(0)=1$ and the definition of $\wt\g$ in \eqref{tg} we have
        \begin{equation*}
            y_1'(s)y_2(s)-y_1(s)y_2'(s)=1,\quad \g'(s)\wt\g(s)-\g(s)\wt\g'(s)=1,\quad\forall\ s\in[0,t].
        \end{equation*}
        Hence
        \begin{align*}
            \frac1{y_1(s)y_2(s)}&=\frac{y_1'(s)y_2(s)-y_1(s)y_2'(s)}{y_1(s)y_2(s)}=\frac{y_1'(s)}{y_1(s)}-\frac{y_2'(s)}{y_2(s)}\\
            &\geq \frac{\g'(s)}{\g(s)}-\frac{\wt\g'(s)}{\wt\g(s)}=\frac{\g'(s)\wt\g(s)-\g(s)\wt\g'(s)}{\g(s)\wt\g(s)}=\frac{1}{\g(s)\wt\g(s)}
        \end{align*}
        for all $s\in[0, t)$, then using $y_2(t)=0=\wt\g(t)$ we have 
        \begin{equation}\label{Eq.y_1y_2_compare}
            y_1(s)y_2(s)\leq \g(s)\wt\g(s),\quad\forall \ s\in[0,t].
        \end{equation}
        Now, we are ready to prove \eqref{Eq.K_bound}. Using the symmetry property $K(s,\tau)=K(\tau,s) $ by the definition of $K$, 
        we can assume that 
        $0\leq \tau\leq s\leq t$. 
        then by the increasing of $\phi_1=y_1/\g$, \eqref{Eq.y_1y_2_compare} and \eqref{Eq.Claim}, we have
        \[0\leq K(s,\tau)=y_1(\tau)y_2(s)\leq \g(\tau)\frac{y_1(s)}{\g(s)}y_2(s)\leq\g(\tau)\wt\g(s)\leq \g(\tau)\frac{t-s}{\g(t)}\leq \frac{\g(s)}{\g(t)}(t-\tau),\]
        where in the last inequality we have also used the increasing of $\g$. 
        This proves \eqref{Eq.K_bound}. Therefore, using \eqref{Eq.SolODE} and \eqref{Eq.K_bound} we obtain
        \[\frac{|y(s)|}{\g(s)}\leq \int_0^t\frac{|K(s,\tau)|}{\g(s)}|F(\tau)|\,\mathrm d\tau\leq \frac1{\g(t)}\int_0^t|F(\tau)|(t-\tau)\,\mathrm d\tau,\quad\forall\ s\in[0,t];\]
        \[\frac{|y(s)|}{t-s}\leq \int_0^t\frac{|K(s,\tau)|}{t-s}|F(\tau)|\,\mathrm d\tau\leq \frac1{\g(t)}\int_0^t|F(\tau)|\g(\tau)\,\mathrm d\tau,\quad\forall\ s\in[0,t).\]
	\end{enumerate}This completes the proof.
\end{proof}

\begin{lemma}\label{lem1.4}Assume \eqref{rho2}, then for $0\leq s\leq t$ we have\begin{align*}
&0<\partial_xX(s;x,x_0,t)\leq \gamma(s)/\gamma(t),\quad 0\leq\partial_{x_0}X(s;x,x_0,t)\leq (t-s)/\gamma(t).
\end{align*}\end{lemma}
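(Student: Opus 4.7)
The plan is to invoke Lemma~\ref{lem1.3} with $h(s) := -q(\partial_1^2\phi)(X(s;x,x_0,t),s)$, applied to $y_1(s) := \partial_x X(s;x,x_0,t)$ and $y_2(s) := \partial_{x_0} X(s;x,x_0,t)$. By \eqref{X5} both $y_1$ and $y_2$ satisfy $y''=h(s)y$ on $[0,t]$, and by Lemma~\ref{lem1.2} the hypothesis \eqref{rho2} yields $|h(s)|\leq\|\partial_x^2\phi(s)\|_{L^\infty}\leq-\gamma''(s)/\gamma(s)$, so the standing hypothesis of Lemma~\ref{lem1.3} is met.

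To read off the boundary conditions, I differentiate the defining identities $X(t;x,x_0,t)=x$ and $X(0;x,x_0,t)-V(0;x,x_0,t)=x_0$, using $V(s)=\partial_s X(s)$ (so $\partial_x V(0)=y_1'(0)$ and $\partial_{x_0} V(0)=y_2'(0)$). Differentiating in $x$ gives $y_1(t)=1$ and $y_1(0)=y_1'(0)$; differentiating in $x_0$ gives $y_2(t)=0$ and $y_2(0)=y_2'(0)+1$.

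The bound on $y_2$ in part~(ii) is then immediate: the boundary data for $y_2$ match exactly the hypothesis of Lemma~\ref{lem1.3}(ii), which yields $0\leq y_2(s)\leq(t-s)/\gamma(t)$. For $y_1$ the only subtlety is that Lemma~\ref{lem1.3}(i) is stated for an initial value problem, whereas $y_1$ satisfies a two-point boundary condition. I handle this by an auxiliary IVP: let $\widehat y_1$ solve $\widehat y_1''=h(s)\widehat y_1$ with $\widehat y_1(0)=\widehat y_1'(0)=1>0$. Lemma~\ref{lem1.3}(i) applied to $\widehat y_1$ gives $0<\widehat y_1(s)\leq\gamma(s)\widehat y_1(t)/\gamma(t)$, so in particular $\widehat y_1(t)>0$. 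The rescaled function $\widehat y_1(s)/\widehat y_1(t)$ solves the same linear ODE and satisfies $y(t)=1$, $y(0)=y'(0)=1/\widehat y_1(t)$, matching the conditions that uniquely determine $y_1$ (the subspace of homogeneous solutions with $y(0)=y'(0)$ is one-dimensional, spanned by $\widehat y_1$). Hence $y_1(s)=\widehat y_1(s)/\widehat y_1(t)$, and dividing the inequality by $\widehat y_1(t)>0$ yields $0<y_1(s)\leq\gamma(s)/\gamma(t)$.

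The main (mild) obstacle is exactly this reduction of the two-point BVP for $y_1$ to an IVP for $\widehat y_1$; everything else is bookkeeping that translates the characteristic ODE into the framework prepared by Lemmas~\ref{lem1.2} and~\ref{lem1.3}.
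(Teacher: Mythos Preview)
Your proof is correct and follows essentially the same approach as the paper: derive the boundary conditions from \eqref{X6} and feed $y_1,y_2$ into Lemma~\ref{lem1.3}. The only cosmetic difference is in how you secure $y_1(0)>0$: the paper does a sign trichotomy on $y_1(0)$ (if $y_1(0)=0$ then $y_1\equiv 0$ by ODE uniqueness; if $y_1(0)<0$ apply Lemma~\ref{lem1.3}(i) to $-y_1$; both contradict $y_1(t)=1$), while you instead build the auxiliary $\widehat y_1$ and identify $y_1=\widehat y_1/\widehat y_1(t)$ via one-dimensionality of the solution subspace---equivalent and equally short.
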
\begin{proof}
For fixed $ x,x_0,t$, let $h(s):=-q(\partial_1^2\phi)(X(s;x,x_0,t),s)$, $y_1(s):=\partial_xX(s;x,x_0,t) $, then by Lemma \ref{lem1.2}, we have $|h(s)|\leq -\gamma''(s)/\gamma(s)$ for $s\geq0$, by \eqref{X5} we have $y_1''(s)=h(s)y_1(s)$ for $s\in[0,t]$ and  $y_1(0)=y_1'(0)$, $ y_1(t)=1$. Here we used \begin{align}\label{X6}
X(t;x,x_0,t)=x,\quad X(0;x,x_0,t)-V(0;x,x_0,t)=x_0,\quad V=\partial_{s}X.
\end{align}
If $y_1(0)=y_1'(0)=0$ then by the uniqueness of the ODE $y''(s)=h(s)y(s)$ we have $y(s)\equiv0$ for $s\in[0,t]$, which contradicts $ y_1(t)=1$.

If $y_1(0)=y_1'(0)<0$ then we can apply Lemma \ref{lem1.3} (i) with $y_1$ replaced by $-y_1$ to deduce that $-y_1(s)>0$ for $s\in[0,t]$, which contradicts $ y_1(t)=1$.

So we must have $y_1(0)=y_1'(0)>0$ then by Lemma \ref{lem1.3} (i) we have $ 0<y_1(s)\leq\gamma(s)y_1(t)/\gamma(t)=\gamma(s)/\gamma(t) $ for $s\in[0,t]$, which implies the first inequality.

Let $y_2(s):=\partial_{x_0}X(s;x,x_0,t) $, then by \eqref{X5} and \eqref{X6} we have $y_2''(s)=h(s)y_2(s)$ for $s\in[0,t]$ and  $y_2(0)=y_2'(0)+1$, $ y_2(t)=0$. By Lemma \ref{lem1.3} (ii) we have $ 0\leq y_2(s)\leq(t-s)/\gamma(t) $ for $s\in[0,t]$, which implies the second inequality.
\end{proof}\begin{lemma}\label{lem1.5}Assume \eqref{rho2}. For fixed $n\in\Z$, $n\geq 2$ if 
\begin{align}\label{X7}
|\partial_x^jX(s;x,x_0,t)|\leq \frac{\varphi_j(t)(j!)^2\gamma(s)}{\AAB\gamma(t)^{j}},\quad \forall\ j\in\Z,\ 2\leq j<n,\ 0\leq s\leq t,
\end{align}
then we have\begin{align*}
&\int_0^t|\partial_x^n[(\partial_1\phi)(X,s)]-(\partial_1^2\phi)(X,s)\partial_x^nX|\frac{t-s}{\gamma(t)}\mathrm{d}s
\leq\frac{(n!)^2\varphi_n(t)}{\ABC\gamma(t)^n},\quad \forall\ t\geq0.
\end{align*}\end{lemma}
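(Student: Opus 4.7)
The plan is to expand $\partial_x^n[(\partial_1\phi)(X,s)]$ with the Faà di Bruno formula \eqref{Fg1} (applied in $x$, with $F(\cdot)=(\partial_1\phi)(\cdot,s)$ and $g(x)=X(s;x,x_0,t)$), and to recognize $(\partial_1^2\phi)(X,s)\partial_x^n X$ as the unique term with $k=\sum_j m_j=1$: indeed, $k=1$ combined with $\sum_j jm_j=n$ forces $m_n=1$ and $m_j=0$ for $j<n$, producing coefficient $n!\cdot F'(g)\cdot g^{(n)}/n!=(\partial_1^2\phi)(X,s)\,\partial_x^n X$. Subtracting it leaves the sum over multi-indices with $k\geq 2$. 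A key observation is that if $k\geq 2$ and $\sum_j jm_j=n$, then $m_n=0$ (otherwise $\sum_j jm_j\geq n+1$); hence the remainder only involves $\partial_x^j X$ for $1\leq j\leq n-1$, so the hypothesis \eqref{X7} is available for all required $j\geq 2$ and no self-referential bound on $\partial_x^n X$ is needed.

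Next I would bound the spatial factors. For $j=1$, Lemma \ref{lem1.4} gives $|\partial_x X(s;x,x_0,t)|\leq\gamma(s)/\gamma(t)$; for $2\leq j\leq n-1$, the hypothesis \eqref{X7} gives $|\partial_x^j X|/j!\leq\varphi_j(t)\,j!\,\gamma(s)/(\AAB\gamma(t)^j)$. Using $\sum_j m_j=k$ and $\sum_j jm_j=n$, the spatial product collapses to
\[\prod_{j=1}^{n}\bigl(|\partial_x^j X|/j!\bigr)^{m_j}\leq\frac{\gamma(s)^k}{\gamma(t)^n}\prod_{j=2}^{n}\Bigl(\frac{\varphi_j(t)\,j!}{\AAB}\Bigr)^{m_j},\]
where the $j=n$ factor is trivially $1$ since $m_n=0$ in the remainder.

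For the time integration I would bound $|(\partial_1^{k+1}\phi)(X,s)|\leq\|\partial_x^{k+1}\phi(s)\|_{L^\infty}$ and apply Lemma \ref{lem1.2} (with $n$ there replaced by $k\geq 2$) to obtain $\int_0^t\|\partial_x^{k+1}\phi(s)\|_{L^\infty}\gamma(s)^k(t-s)/\gamma(t)\,\mathrm{d}s\leq\varphi_k(t)(k!)^2/\AAC$. Multiplying by the Faà di Bruno coefficient $n!/(m_1!\cdots m_n!)$ and summing (the sum over $k\geq 2$ is bounded by the full sum since every term is non-negative), the estimate \eqref{n4} of Lemma \ref{lem1.1} controls
\[\sum_{*}\frac{(k!)^2\varphi_k(t)}{m_1!\cdots m_n!}\prod_{j=2}^{n}\bigl(j!\varphi_j(t)/\AAB\bigr)^{m_j}\leq n!\,\varphi_n(t)\,\mathrm{e}^{0.15}.\]
Assembling the pieces gives
\[\int_0^t\bigl|\partial_x^n[(\partial_1\phi)(X,s)]-(\partial_1^2\phi)(X,s)\partial_x^n X\bigr|\,\frac{t-s}{\gamma(t)}\,\mathrm{d}s\leq\frac{(n!)^2\varphi_n(t)\,\mathrm{e}^{0.15}}{\AAC\,\gamma(t)^n}\leq\frac{(n!)^2\varphi_n(t)}{\ABC\,\gamma(t)^n},\]
the last inequality following from $\ABC\cdot\mathrm{e}^{0.15}<\AAC$ (namely $230\,\mathrm{e}^{0.15}<3000$).

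The main obstacle is bookkeeping rather than analysis: correctly restricting the Faà di Bruno sum to $k\geq 2$, verifying that $m_n=0$ on the remainder so the induction hypothesis \eqref{X7} is sufficient, and matching the constant $\AAB=200$ in \eqref{X7} with the denominator $200$ appearing in \eqref{n4}. Once these are lined up, Lemmas \ref{lem1.1}, \ref{lem1.2}, and \ref{lem1.4} package all the analytic content.
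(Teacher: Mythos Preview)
Your proof is correct and essentially identical to the paper's: the paper characterizes the remainder by $m_n=0$ (which you correctly note is equivalent to $k\geq 2$), then applies Lemma~\ref{lem1.4}, hypothesis~\eqref{X7}, Lemma~\ref{lem1.2}, and finally~\eqref{n4} exactly as you do. One minor slip: in your parenthetical you write $230\,\mathrm{e}^{0.15}<3000$, but $\AAC=270$ (not $3000$, which is $\ACD$); the needed inequality $230\,\mathrm{e}^{0.15}\approx 267.2<270$ still holds.
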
\begin{proof}By the Fa\'a di Bruno formula we have\begin{align*}
&|\partial_x^n[(\partial_1\phi)(X,s)]-(\partial_1^2\phi)(X,s)\partial_x^nX|
\leq\sum_{**}\frac{n!}{m_1!\cdots m_n!}|(\partial_1^{k+1}\phi)(X,s)|\cdot\prod_{j=1}^n|\partial_x^jX/j!|^{m_j}.
\end{align*}Here $**$ denotes the sum over all $n$-tuples of non-negative integers $m_1,\cdots,m_n$ satisfying $\sum_{j=1}^njm_j=n$, $m_n=0$ 
and $k:=\sum_{j=1}^nm_j.$ Then by Lemma \ref{lem1.4}, \eqref{X7}, Lemma \ref{lem1.2} and \eqref{n4} in Lemma \ref{lem1.1} we have 
\begin{align*}
&\int_0^t|\partial_x^n[(\partial_1\phi)(X,s)]-(\partial_1^2\phi)(X,s)\partial_x^nX|\frac{t-s}{\gamma(t)}\mathrm{d}s
\\ \leq&\sum_{**}\frac{n!}{m_1!\cdots m_n!}\int_0^t|(\partial_1^{k+1}\phi)(X,s)|\bigg(\prod_{j=1}^n|\partial_x^jX/j!|^{m_j}\bigg)\frac{t-s}{\gamma(t)}\mathrm{d}s\\
\leq&\sum_{**}\frac{n!}{m_1!\cdots m_n!}\int_0^t\|\partial_1^{k+1}\phi(s)\|_{L^{\infty}}\left|\frac{\gamma(s)}{\gamma(t)}\right|^{m_1}\bigg(
\prod_{j=2}^n\left|\frac{\varphi_j(t)j!\gamma(s)}{\AAB\gamma(t)^{j}}\right|^{m_j}\bigg)\frac{t-s}{\gamma(t)}\mathrm{d}s\\
=&\sum_{**}\frac{n!}{m_1!\cdots m_n!}\int_0^t\|\partial_1^{k+1}\phi(s)\|_{L^{\infty}}\gamma(s)^k\frac{t-s}{\gamma(t)}\mathrm{d}s
\cdot\frac{1}{\gamma(t)^n}\cdot
\prod_{j=2}^n\left|\frac{\varphi_j(t)j!}{\AAB}\right|^{m_j}\\
\leq&\sum_{**}\frac{n!}{m_1!\cdots m_n!}\frac{\varphi_k(t)(k!)^2}{\AAC}\cdot\frac{1}{\gamma(t)^n}\cdot\prod_{j=2}^n\left|\frac{\varphi_j(t)j!}{\AAB}\right|^{m_j}
\leq\frac{(n!)^2\varphi_n(t)}{\AAC\gamma(t)^n}\mathrm{e}^{0.15}\leq\frac{(n!)^2\varphi_n(t)}{\ABC\gamma(t)^n}.
\end{align*}Here we used $\sum_{**}\leq \sum_{*} $ and \eqref{n4} in the last step. This completes the proof.
\end{proof}\begin{lemma}\label{lem1.6}Assume \eqref{rho2},
then we have\begin{align}\label{X1}
&|\partial_x^nX(s;x,x_0,t)|\leq \frac{\varphi_n(t)(n!)^2\gamma(s)}{\AAB\gamma(t)^{n}},\quad \forall\ n\in\Z,\ n\geq2,\ 0\leq s\leq t,\\ \label{X2}
&|\partial_xw_0(x,x_0,t)|\leq \frac{1}{\gamma(t)},\quad |\partial_x^nw_0(x,x_0,t)|\leq \frac{\varphi_n(t)(n!)^2}{\AAB\gamma(t)^{n}},\quad \forall\ n\in\Z,\ n\geq2,\ t\geq0.
\end{align}Moreover for $n\in\Z$, $n\geq0$, $t\geq0$ we have\begin{align}\label{f01}
&\int_{\R}|\partial_x^n[\tilde{f}_0(x_0,w_0(x,x_0,t))]|\mathrm{d}x_0\leq\frac{(n!)^2\varphi_n(t)}{\ABD\gamma(t)^n},\\ \label{phi2}
&\int_0^t|\partial_x^n[(\partial_1^2\phi)(X,s)]|\gamma(s)\frac{t-s}{\gamma(t)}\mathrm{d}s
\leq\frac{(n!)^2\varphi_n(t)}{2\gamma(t)^n}.
\end{align}\end{lemma}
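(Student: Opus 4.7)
The plan is to establish the four estimates in the order they are stated, with \eqref{X1} proved by strong induction on $n$ and the rest following more or less directly from it by the Fa\'a di Bruno formula together with the combinatorial estimate \eqref{n4} and the kernel estimate in Lemma \ref{lem1.2}.

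For \eqref{X1}, I will induct on $n\geq2$. Differentiating the characteristic equation $\partial_s^2 X=-q(\partial_1\phi)(X,s)$ exactly $n$ times in $x$ and isolating the top-order term yields
\begin{align*}
\partial_s^2(\partial_x^n X)=h(s)\,\partial_x^n X+F_n(s),
\end{align*}
where $h(s)=-q(\partial_1^2\phi)(X,s)$ (already known from Lemma \ref{lem1.2} to satisfy $|h(s)|\leq -\gamma''(s)/\gamma(s)$) and
\[F_n(s)=-q\bigl\{\partial_x^n[(\partial_1\phi)(X,s)]-(\partial_1^2\phi)(X,s)\partial_x^n X\bigr\}.\]
The boundary conditions are precisely those of Lemma \ref{lem1.3}(iii): $\partial_x^n X(t)=0$ since $X(t;x,x_0,t)=x$ and $n\geq2$, and $\partial_x^n X(0)=\partial_s\partial_x^n X(0)$ since $X(0)-V(0)=x_0$ is independent of $x$. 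The inductive hypothesis on derivatives of order $2\le j<n$ is exactly \eqref{X7}, so Lemma \ref{lem1.5} bounds $\int_0^t|F_n(s)|(t-s)/\gamma(t)\,ds$ by $(n!)^2\varphi_n(t)/(230\gamma(t)^n)$. Applying the first estimate of Lemma \ref{lem1.3}(iii) and noting $230>200$ closes the induction.

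Estimate \eqref{X2} is then immediate: from $X(0)-V(0)=x_0$ we have $w_0=X(0;x,x_0,t)-x_0$, so for $n\geq2$ we evaluate \eqref{X1} at $s=0$ and use $\gamma(0)=1$; the bound on $|\partial_x w_0|$ follows from the $n=1$ case of Lemma \ref{lem1.4} evaluated at $s=0$.

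For \eqref{f01}, I apply Fa\'a di Bruno to $\tilde f_0(x_0,w_0(x,x_0,t))$ in the $x$ variable; the inner derivatives $(\partial_v^k\tilde f_0)(x_0,w_0)$ are controlled in $L^1_{x_0}$ by $\|\partial_v^{k+1}\tilde f_0\|_{L^1_{x,v}}\leq(k!)^2/10^4$ via \eqref{L1} and \eqref{f03}, while the outer factors $|\partial_x^j w_0/j!|^{m_j}$ are bounded using \eqref{X2} (with the $j=1$ case treated separately via $|\partial_x w_0|\leq1/\gamma(t)$). Because $\sum jm_j=n$, all powers of $\gamma(t)$ combine into $\gamma(t)^{-n}$; inserting the factor $\varphi_k(t)\geq1$ for free and invoking \eqref{n4} produces the constant $n!\varphi_n(t)e^{0.15}/10^4\leq n!\varphi_n(t)/8000$. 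For \eqref{phi2}, I again expand by Fa\'a di Bruno in $x$, this time applied to $(\partial_1^2\phi)(X,s)$; the spatial derivative factors are handled by \eqref{X1} (and Lemma \ref{lem1.4} for $j=1$), yielding a product with a factor $\gamma(s)^k$. The $s$-integral $\int_0^t\|\partial_1^{k+2}\phi\|_{L^\infty}\gamma(s)^{k+1}(t-s)/\gamma(t)\,ds$ is then bounded by the \emph{second} alternative in Lemma \ref{lem1.2}, i.e.\ $\varphi_k(t)(k!)^2/3$, and \eqref{n4} again collapses the combinatorial sum to yield a total constant $e^{0.15}/3\leq1/2$.

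The main obstacle is the induction step for \eqref{X1}: one must check that the ODE satisfied by $\partial_x^n X$ really has the boundary data matching Lemma \ref{lem1.3}(iii) (both the vanishing at $s=t$ for $n\geq2$ and the $y(0)=y'(0)$ condition at $s=0$) and that the remainder $F_n$ is exactly the quantity bounded by Lemma \ref{lem1.5}. The choice of constant $\AAB=200$ (with the slack $230>200$ provided by Lemma \ref{lem1.5}) is also delicate; for \eqref{phi2} the key subtlety is electing the second, rather than the first, alternative in Lemma \ref{lem1.2}, since the first would produce $((k+1)!)^2$ and $\varphi_{k+1}(t)$, preventing application of \eqref{n4}.
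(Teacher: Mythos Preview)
Your proposal is correct and takes essentially the same approach as the paper: \eqref{X1} by induction via Lemma \ref{lem1.3}(iii) and Lemma \ref{lem1.5}, \eqref{X2} by evaluating \eqref{X1} and Lemma \ref{lem1.4} at $s=0$, and \eqref{f01}, \eqref{phi2} by Fa\`a di Bruno combined with \eqref{n4} and (for \eqref{phi2}) the second alternative in Lemma \ref{lem1.2}. The only minor detail you omit is that the paper handles the $n=0$ cases of \eqref{f01} and \eqref{phi2} separately, since the Fa\`a di Bruno expansion is vacuous there.
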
\begin{proof}
We first prove \eqref{X1} by induction on $n$, it is enough to prove \eqref{X1} assuming \eqref{X7}.
\footnote{When $n=2$, \eqref{X7} is nothing, i.e. automatically satisfied.}
As $\partial_s^2X=-q(\partial_1\phi)(X,s), $  we have $\partial_s^2\partial_x^nX=-q\partial_x^n[(\partial_1\phi)(X,s)]$.
For fixed $ x,x_0,t$ and $n\geq2$, let $h(s):=-q(\partial_1^2\phi)(X(s;x,x_0,t),s)$, $y(s):=\partial_x^nX(s;x,x_0,t) $, then we have
$y''(s)=h(s)y(s)+F(s)$ for $s\in[0,t]$ with $F(s):=-q(\partial_x^n[(\partial_1\phi)(X,s)]-(\partial_1^2\phi)(X,s)\partial_x^nX)$.

 By Lemma \ref{lem1.2}, we have $|h(s)|\leq -\gamma''(s)/\gamma(s)$ for $s\geq0$, by \eqref{X6} we have $y(0)=y'(0)$, $y(t)=0$.
 By Lemma \ref{lem1.3} (iii) and Lemma \ref{lem1.5} (assuming \eqref{X7}) we have \begin{align*}
\sup_{s\in[0,t]}\frac{|y(s)|}{\gamma(s)}&\leq \int_{0}^{t}|F(s)|\frac{t-s}{\gamma(t)}\mathrm{d}s=
\int_0^t|\partial_x^n[(\partial_1\phi)(X,s)]-(\partial_1^2\phi)(X,s)\partial_x^nX|\frac{t-s}{\gamma(t)}\mathrm{d}s\\
&\leq\frac{(n!)^2\varphi_n(t)}{\ABC\gamma(t)^n}\leq \frac{\varphi_n(t)(n!)^2}{\AAB\gamma(t)^{n}}.
\end{align*}This implies \eqref{X1} (assuming \eqref{X7}). By induction, we complete the proof of \eqref{X1}.

Recall that $w_0(x,x_0,t)=V(0;x,x_0,t)$, by \eqref{X6} we have $\partial_x^nw_0(x,x_0,t)=\partial_x^nX(0;x,x_0,t)$ for $n\in\Z_+$, then 
\eqref{X2} follows from Lemma \ref{lem1.4}, \eqref{X1} (and $\gamma(0)=1 $) by taking $s=0$.

For $n\in\Z_+$, by the Fa\'a di Bruno formula and \eqref{X2} we have
\begin{align}\notag
|\partial_x^n[\tilde{f}_0(x_0,w_0(x,x_0,t))]|&\leq\sum_{*}\frac{n!}{m_1!\cdots m_n!}|(\partial_v^{k}\tilde{f}_0)(x_0,w_0)|\cdot\prod_{j=1}^n|\partial_x^jw_0/j!|^{m_j}\\
\label{f04}&\leq\sum_{*}\frac{n!}{m_1!\cdots m_n!}|(\partial_v^{k}\tilde{f}_0)(x_0,w_0)|\cdot\frac{1}{\gamma(t)^{m_1}}\cdot
\prod_{j=2}^n\left|\frac{\varphi_j(t)j!}{\AAB\gamma(t)^{j}}\right|^{m_j}\\
\notag&=\sum_{*}\frac{n!}{m_1!\cdots m_n!}|(\partial_v^{k}\tilde{f}_0)(x_0,w_0)|\cdot\frac{1}{\gamma(t)^{n}}\cdot
\prod_{j=2}^n\left|\frac{\varphi_j(t)j!}{\AAB}\right|^{m_j}.
\end{align}By \eqref{L1} we have (for all $k\in\Z$, $k\geq0$)\begin{align}
\label{f05}&\int_{\R}|(\partial_v^{k}\tilde{f}_0)(x_0,w_0(x,x_0,t))|\mathrm{d}x_0
\leq\int_{\R}\int_{\R}|\partial_v^{k+1}\tilde{f}_0(x_0,v)|\mathrm{d}v\mathrm{d}x_0\\
\notag=&\|\partial_v^{k+1}\tilde{f}_0\|_{L_{x,v}^1}=\|(\partial_x+\partial_v)^{k+1}f_0\|_{L_{x,v}^1}\leq (k!)^2/\AAD.
\end{align}By \eqref{f04}, \eqref{f05}, \eqref{n4} in Lemma \ref{lem1.1} and $\varphi_k(t)\geq1 $ we have\begin{align*}
\int_{\R}|\partial_x^n[\tilde{f}_0(x_0,w_0(x,x_0,t))]|\mathrm{d}x_0&\leq\sum_{*}\frac{n!(k!)^2}{\AAD m_1!\cdots m_n!}\cdot\frac{1}{\gamma(t)^{n}}\cdot
\prod_{j=2}^n\left|\frac{\varphi_j(t)j!}{\AAB}\right|^{m_j}\\
&\leq \frac{(n!)^2\varphi_n(t)}{\AAD \gamma(t)^n}\mathrm{e}^{0.15}\leq\frac{(n!)^2\varphi_n(t)}{\ABD\gamma(t)^n},
\end{align*}which is \eqref{f01} for $n\in\Z_+$. For $n=0$, by \eqref{f05} for $k=0$ and $\varphi_0(t)=1 $ we have\begin{align*}
\int_{\R}|\partial_x^n[\tilde{f}_0(x_0,w_0(x,x_0,t))]|\mathrm{d}x_0&\leq \frac{1}{\AAD}\leq \frac{1}{\ABD}=\frac{(n!)^2\varphi_n(t)}{\ABD\gamma(t)^n},
\end{align*}\eqref{f01} is still true. This completes the proof of \eqref{f01}.

For $n\in\Z_+$, by the Fa\'a di Bruno formula and \eqref{X1} we have
\begin{align*}
|\partial_x^n[(\partial_1^2\phi)(X,s)]|&\leq\sum_{*}\frac{n!}{m_1!\cdots m_n!}|(\partial_1^{k+2}\phi)(X,s)|\cdot\prod_{j=1}^n|\partial_x^jX/j!|^{m_j}\\
&\leq\sum_{*}\frac{n!}{m_1!\cdots m_n!}\|\partial_1^{k+2}\phi(s)\|_{L^{\infty}}\cdot\left|\frac{\gamma(s)}{\gamma(t)}\right|^{m_1}\cdot
\prod_{j=2}^n\left|\frac{\varphi_j(t)j!\gamma(s)}{\AAB\gamma(t)^{j}}\right|^{m_j}\\ &=\sum_{*}\frac{n!}{m_1!\cdots m_n!}\|\partial_1^{k+2}\phi(s)\|_{L^{\infty}}\frac{\gamma(s)^k}{\gamma(t)^n}\cdot
\prod_{j=2}^n\left|\frac{\varphi_j(t)j!}{\AAB}\right|^{m_j}.
\end{align*}
Then by Lemma \ref{lem1.2} and \eqref{n4} in Lemma \ref{lem1.1} we have\begin{align*}
&\int_0^t|\partial_x^n[(\partial_1^2\phi)(X,s)]|\gamma(s)\frac{t-s}{\gamma(t)}\mathrm{d}s\\
\leq&\sum_{*}\frac{n!}{m_1!\cdots m_n!}\int_0^t\|\partial_1^{k+2}\phi(s)\|_{L^{\infty}}\gamma(s)^{k+1}\frac{t-s}{\gamma(t)}\mathrm{d}s\cdot\frac{1}{\gamma(t)^n}\cdot
\prod_{j=2}^n\left|\frac{\varphi_j(t)j!}{\AAB}\right|^{m_j}\\
\leq&\sum_{*}\frac{n!}{m_1!\cdots m_n!}\frac{\varphi_k(t)(k!)^2}{3}\cdot\frac{1}{\gamma(t)^n}\cdot\prod_{j=2}^n\left|\frac{\varphi_j(t)j!}{\AAB}\right|^{m_j}
\leq\frac{(n!)^2\varphi_n(t)}{3\gamma(t)^n}\mathrm{e}^{0.15}\leq\frac{(n!)^2\varphi_n(t)}{2\gamma(t)^n},
\end{align*}which is \eqref{phi2} for $n\in\Z_+$. For $n=0$, by Lemma \ref{lem1.2} we have\begin{align*}
\int_0^t|\partial_x^n[(\partial_1^2\phi)(X,s)]|\gamma(s)\frac{t-s}{\gamma(t)}\mathrm{d}s\leq \int_0^t\|\partial_1^{2}\phi(s)\|_{L^{\infty}}\gamma(s)\frac{t-s}{\gamma(t)}\mathrm{d}s\leq\frac{\varphi_0(t)}{3}
\leq\frac{(n!)^2\varphi_n(t)}{2\gamma(t)^n},
\end{align*}\eqref{phi2} is still true. This completes the proof of \eqref{phi2}.
\end{proof}\begin{lemma}\label{lem1.7}Assume \eqref{rho2}. For fixed $n\in\Z_+$ if \begin{align}\label{X8}
|\partial_x^j\partial_{x_0}X(s;x,x_0,t)|\leq \frac{\varphi_j(t)(j!)^2(t-s)}{\gamma(t)^{j+1}},\quad \forall\ j\in\Z,\ 0\leq j<n,\ 0\leq s\leq t,
\end{align} 
then we have\begin{align*}
&\int_0^t|\partial_x^n[(\partial_1^2\phi)(X,s)\partial_{x_0}X]-(\partial_1^2\phi)(X,s)\partial_x^n\partial_{x_0}X|\frac{\gamma(s)}{\gamma(t)}\mathrm{d}s
\leq\frac{(n!)^2\varphi_n(t)}{\gamma(t)^{n+1}},\quad \forall\ t\geq0.
\end{align*}\end{lemma}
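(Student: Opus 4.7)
The plan is to expand $\partial_x^n[(\partial_1^2\phi)(X,s)\partial_{x_0}X]$ by the Leibniz product rule, observe that the $k=0$ term is exactly the term subtracted on the left-hand side, and bound the remaining $k\geq1$ terms via the inductive hypothesis \eqref{X8} together with the integral bound \eqref{phi2} from Lemma \ref{lem1.6}. The structure is parallel to the proof of Lemma \ref{lem1.5}, with the Leibniz rule playing the role of the Fa\`a di Bruno formula.

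Explicitly, one has
\begin{align*}
\partial_x^n\big[(\partial_1^2\phi)(X,s)\partial_{x_0}X\big]-(\partial_1^2\phi)(X,s)\partial_x^n\partial_{x_0}X=\sum_{k=1}^n\binom{n}{k}\partial_x^k\big[(\partial_1^2\phi)(X,s)\big]\,\partial_x^{n-k}\partial_{x_0}X.
\end{align*}
For each $k\in\{1,\ldots,n\}$ the index $n-k$ satisfies $0\leq n-k<n$, so \eqref{X8} gives
\begin{align*}
|\partial_x^{n-k}\partial_{x_0}X(s)|\leq\frac{\varphi_{n-k}(t)((n-k)!)^2(t-s)}{\gamma(t)^{n-k+1}}.
\end{align*}
After substituting this bound and regrouping, the integrand (multiplied by $\gamma(s)/\gamma(t)$) in the $k$-th term becomes
\begin{align*}
\frac{\varphi_{n-k}(t)((n-k)!)^2}{\gamma(t)^{n-k+1}}\cdot|\partial_x^k[(\partial_1^2\phi)(X,s)]|\,\gamma(s)\,\frac{t-s}{\gamma(t)},
\end{align*}
and the time integral of the second factor is exactly what \eqref{phi2} controls, with bound $(k!)^2\varphi_k(t)/(2\gamma(t)^k)$.

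Summing over $k$ and using the identity $\binom{n}{k}((n-k)!)^2(k!)^2=(n!)^2\binom{n}{k}^{-1}$ together with the first inequality of Lemma \ref{lem1.2b} yields
\begin{align*}
\sum_{k=1}^n\binom{n}{k}\frac{\varphi_{n-k}(t)((n-k)!)^2}{\gamma(t)^{n-k+1}}\cdot\frac{(k!)^2\varphi_k(t)}{2\gamma(t)^k}=\frac{(n!)^2}{2\gamma(t)^{n+1}}\sum_{k=1}^n\binom{n}{k}^{-1}\varphi_{n-k}(t)\varphi_k(t)\leq\frac{5(n!)^2\varphi_n(t)}{6\gamma(t)^{n+1}},
\end{align*}
which already lies below the target bound $(n!)^2\varphi_n(t)/\gamma(t)^{n+1}$ with room to spare. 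I do not anticipate any genuine obstacle: the only thing to confirm is that the factor $1/2$ supplied by \eqref{phi2} combines with the factor $5/3$ from Lemma \ref{lem1.2b} to produce $5/6<1$, so the estimate closes cleanly without any extra work on the constants.
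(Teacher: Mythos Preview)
Your proposal is correct and follows essentially the same approach as the paper: Leibniz expansion, removal of the $k=0$ term, application of \eqref{X8} and \eqref{phi2}, and the final summation via the first inequality of Lemma~\ref{lem1.2b}, yielding the constant $5/6<1$.
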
\begin{proof}
By the Leibniz formula, \eqref{X8}, \eqref{phi2} in Lemma \ref{lem1.6} and  Lemma \ref{lem1.2b} we have
\begin{align*}
&\int_0^t|\partial_x^n[(\partial_1^2\phi)(X,s)\partial_{x_0}X]-(\partial_1^2\phi)(X,s)\partial_x^n\partial_{x_0}X|
\frac{\gamma(s)}{\gamma(t)}\mathrm{d}s\\
\leq&\sum_{k=1}^{n}{n\choose k}
\int_0^t|\partial_x^k[(\partial_1^2\phi)(X,s)]||\partial_x^{n-k}\partial_{x_0}X|\frac{\gamma(s)}{\gamma(t)}\mathrm{d}s\\
\leq&\sum_{k=1}^{n}{n\choose k}
\int_0^t|\partial_x^k[(\partial_1^2\phi)(X,s)]|\frac{\varphi_{n-k}(t)((n-k)!)^2(t-s)}{\gamma(t)^{n-k+1}}\frac{\gamma(s)}{\gamma(t)}\mathrm{d}s\\
=&\sum_{k=1}^{n}{n\choose k}
\frac{\varphi_{n-k}(t)((n-k)!)^2}{\gamma(t)^{n-k+1}}\int_0^t|\partial_x^k[(\partial_1^2\phi)(X,s)]|\gamma(s)\frac{t-s}{\gamma(t)}\mathrm{d}s\\
\leq&\sum_{k=1}^{n}{n\choose k}
\frac{\varphi_{n-k}(t)((n-k)!)^2}{\gamma(t)^{n-k+1}}\frac{(k!)^2\varphi_k(t)}{2\gamma(t)^k}\\
=&(n!)^2\sum_{k=1}^{n}{n\choose k}^{-1}
\frac{\varphi_{n-k}(t)\varphi_k(t)}{2\gamma(t)^{n+1}}\leq(n!)^2
\frac{5}{3}\cdot\frac{\varphi_{n}(t)}{2\gamma(t)^{n+1}}\leq
\frac{(n!)^2\varphi_{n}(t)}{\gamma(t)^{n+1}}.
\end{align*}This completes the proof.
\end{proof}
\begin{lemma}\label{lem1.8}Assume \eqref{rho2},
then we have\begin{align}\label{X3}
&|\partial_x^n\partial_{x_0}X(s;x,x_0,t)|\leq \frac{\varphi_n(t)(n!)^2(t-s)}{\gamma(t)^{n+1}},\quad \forall\ n\in\Z,\ n\geq0,\ 0\leq s\leq t,\\ \label{X4}
&|\partial_x^n\partial_{x_0}w(x,x_0,t)|\leq \frac{\varphi_n(t)(n!)^2}{\gamma(t)^{n+1}},\quad \forall\ n\in\Z,\ n\geq0,\ t\geq0.
\end{align}Moreover for $n\in\Z$, $n\geq0$, $t\geq0$ we have\begin{align}\label{f02}
&\int_{\R}|\partial_x^n[\tilde{f}_0(x_0,w_0(x,x_0,t))\partial_{x_0}w(x,x_0,t)]|\mathrm{d}x_0\leq\frac{(n!)^2\varphi_n(t)}{\ACD\gamma(t)^n}.
\end{align}\end{lemma}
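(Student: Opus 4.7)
My plan is to mirror the structure of Lemma \ref{lem1.6}: setting $y_n(s):=\partial_x^n\partial_{x_0}X(s;x,x_0,t)$, first prove \eqref{X3} by induction on $n$ via the linear ODE satisfied by $y_n$, then read off \eqref{X4} from the same intermediate supremum estimate evaluated at $s=t$, and finally deduce \eqref{f02} by combining the Leibniz rule with \eqref{f01}, \eqref{X4}, and Lemma \ref{lem1.2b}.

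Differentiating $\partial_s^2\partial_{x_0}X=-q(\partial_1^2\phi)(X,s)\,\partial_{x_0}X$ in $x$ gives $y_n''(s)=h(s)y_n(s)+F_n(s)$, where $h(s):=-q(\partial_1^2\phi)(X(s),s)$ satisfies $|h(s)|\leq -\gamma''(s)/\gamma(s)$ by Lemma \ref{lem1.2}, and $F_n(s)$ is exactly the commutator controlled by Lemma \ref{lem1.7}. Differentiating the constraints $X(t;x,x_0,t)=x$ and $X(0;x,x_0,t)-V(0;x,x_0,t)=x_0$ in $x$ and $x_0$ yields the boundary conditions $y_n(t)=0$ for all $n\geq 0$, together with $y_0(0)=y_0'(0)+1$ and $y_n(0)=y_n'(0)$ for every $n\geq 1$. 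The base case $n=0$ of \eqref{X3} is then exactly Lemma \ref{lem1.4}. For the inductive step $n\geq 1$, the hypothesis \eqref{X3} for $j<n$ is precisely the assumption \eqref{X8} of Lemma \ref{lem1.7}; applying the second inequality of Lemma \ref{lem1.3}(iii) to $y_n$ and invoking Lemma \ref{lem1.7} gives
\[\sup_{s\in[0,t)}\frac{|y_n(s)|}{t-s}\leq \int_0^t|F_n(s)|\frac{\gamma(s)}{\gamma(t)}\,\mathrm{d}s\leq\frac{(n!)^2\varphi_n(t)}{\gamma(t)^{n+1}},\]
which closes the induction for \eqref{X3} (the endpoint $s=t$ being immediate from $y_n(t)=0$). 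Since $w=\partial_sX|_{s=t}$, commuting $\partial_x,\partial_{x_0}$ with $\partial_s$ gives $\partial_x^n\partial_{x_0}w(x,x_0,t)=y_n'(t)$, and because $y_n(t)=0$ we have $|y_n'(t)|=\lim_{s\to t^-}|y_n(s)|/(t-s)$, so the same supremum bound immediately delivers \eqref{X4} for $n\geq 1$. The case $n=0$ of \eqref{X4} follows the same way from Lemma \ref{lem1.4}, or equivalently from the explicit identity $y_0'(t)=-\partial_xX(0;x,x_0,t)$.

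Finally, for \eqref{f02} I would apply the Leibniz rule to $\partial_x^n[\tilde{f}_0(x_0,w_0)\,\partial_{x_0}w]$, take $\partial_x^{n-k}\partial_{x_0}w$ in $L^\infty_{x_0}$ using \eqref{X4}, bound the remaining $x_0$-integral of $\partial_x^k[\tilde{f}_0(x_0,w_0)]$ by \eqref{f01}, and sum via the second inequality of Lemma \ref{lem1.2b}. This actually produces a bound with $\gamma(t)^{n+1}$ in the denominator; since $\gamma(t)\geq 1$, the stated inequality with $\gamma(t)^n$ follows a fortiori. The main technical point requiring care is the boundary setup for $y_n$ at $s=t$—verifying $y_n(t)=0$ together with the initial-velocity matching $y_n(0)=y_n'(0)$ for every $n\geq 1$, so that Lemma \ref{lem1.3}(iii) applies and yields \eqref{X3} and \eqref{X4} from a single supremum estimate; once this setup is in place, the rest is a routine combination of Lemmas \ref{lem1.3}, \ref{lem1.7}, \ref{lem1.6}, and \ref{lem1.2b}.
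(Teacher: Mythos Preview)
Your proposal is correct and follows essentially the same route as the paper: induction on $n$ for \eqref{X3} via Lemma~\ref{lem1.3}(iii) and Lemma~\ref{lem1.7} (with the base case from Lemma~\ref{lem1.4}), then \eqref{X4} from $y_n(t)=0$ and the supremum bound, and finally \eqref{f02} by Leibniz together with \eqref{f01}, \eqref{X4}, and Lemma~\ref{lem1.2b}. Your observation that the argument in fact yields $\gamma(t)^{n+1}$ in the denominator of \eqref{f02} matches the paper's own computation (and is what is actually used to conclude Proposition~\ref{prop1}).
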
Now Proposition \ref{prop1} follows from \eqref{rho1} and \eqref{f02}.\begin{proof}
We first prove \eqref{X3} by induction on $n$. The case $n=0$ follows from Lemma \ref{lem1.4} (recall that $\varphi_0(t)=1$).
For $n\in\Z_+$, it is enough to prove \eqref{X3} assuming \eqref{X8}.
By \eqref{X5}   we have $\partial_s^2\partial_x^n\partial_{x_0}X=-q\partial_x^n[(\partial_1^2\phi)(X,s)\partial_{x_0}X]$.
For fixed $ x,x_0,t$ and $n\in\Z_+$, let $h(s):=-q(\partial_1^2\phi)(X(s;x,x_0,t),s)$, $y(s):=\partial_x^n\partial_{x_0}X(s;x,x_0,t) $, then we have
$y''(s)=h(s)y(s)+F(s)$ for $s\in[0,t]$ with $F(s):=-q(\partial_x^n[(\partial_1\phi)(X,s)\partial_{x_0}X]-(\partial_1^2\phi)(X,s)\partial_x^n\partial_{x_0}X)$.

 By Lemma \ref{lem1.2}, we have $|h(s)|\leq -\gamma''(s)/\gamma(s)$ for $s\geq0$, by \eqref{X6} we have $y(0)=y'(0)$, $y(t)=0$.
 By Lemma \ref{lem1.3} (iii) and Lemma \ref{lem1.5} (assuming \eqref{X8}) we have \begin{align*}
\sup_{s\in[0,t)}\frac{|y(s)|}{t-s}&\leq 
\int_{0}^{t}|F(s)|\frac{\gamma(s)}{\gamma(t)}\mathrm{d}s=
\int_0^t|\partial_x^n[(\partial_1\phi)(X,s)\partial_{x_0}X]-(\partial_1^2\phi)(X,s)\partial_x^n\partial_{x_0}X|\frac{\gamma(s)}{\gamma(t)}\mathrm{d}s\\
&\leq\frac{(n!)^2\varphi_n(t)}{\gamma(t)^{n+1}},\quad y(t)=0.
\end{align*}This implies \eqref{X3} (assuming \eqref{X8}). By induction, we complete the proof of \eqref{X3}.

Recall that $w(x,x_0,t)=V(t;x,x_0,t)$, $V=\partial_{s}X$, then we have $\partial_x^n\partial_{x_0}w(x,x_0,t)=y'(t) $ with $y(s):=\partial_x^n\partial_{x_0}X(s;x,x_0,t) $ and $y(t)=0$ (using \eqref{X6}). Thus  (using \eqref{X3})\begin{align*}
|\partial_x^n\partial_{x_0}w(x,x_0,t)|=|y'(t)|\leq \sup_{s\in[0,t)}\frac{|y(s)|}{t-s}
\leq\frac{(n!)^2\varphi_n(t)}{\gamma(t)^{n+1}},
\end{align*}
which is \eqref{X4}. By the Leibniz formula, \eqref{X4}, \eqref{f01} in Lemma \ref{lem1.6} and  Lemma \ref{lem1.2b} we have
\begin{align*}
&\int_{\R}|\partial_x^n[\tilde{f}_0(x_0,w_0(x,x_0,t))\partial_{x_0}w(x,x_0,t)]|\mathrm{d}x_0\\
\leq&\sum_{k=0}^{n}{n\choose k}
\int_{\R}|\partial_x^k[\tilde{f}_0(x_0,w_0(x,x_0,t))]||\partial_x^{n-k}\partial_{x_0}w(x,x_0,t)|\mathrm{d}x_0\\
\leq&\sum_{k=0}^{n}{n\choose k}
\int_{\R}|\partial_x^k[\tilde{f}_0(x_0,w_0(x,x_0,t))]|\frac{\varphi_{n-k}(t)((n-k)!)^2}{\gamma(t)^{n-k+1}}\mathrm{d}x_0\\
\leq&\sum_{k=0}^{n}{n\choose k}
\frac{(k!)^2\varphi_k(t)}{\ABD\gamma(t)^k}\frac{\varphi_{n-k}(t)((n-k)!)^2}{\gamma(t)^{n-k+1}}
=(n!)^2\sum_{k=0}^{n}{n\choose k}^{-1}
\frac{\varphi_{n-k}(t)\varphi_k(t)}{\ABD\gamma(t)^{n+1}}\\ \leq&(n!)^2
\cdot\frac{8}{3}\cdot\frac{\varphi_{n}(t)}{\ABD\gamma(t)^{n+1}}=
\frac{(n!)^2\varphi_{n}(t)}{\ACD\gamma(t)^{n+1}}, 
\end{align*}which is \eqref{f02}. This completes the proof.\end{proof}
\if0\begin{align*}
&\int_0^t \|\partial_x^{n+1}\phi(s)\|_{L^{\infty}}\gamma(s)^n\mathrm{d}s\leq \frac{\varphi_n(t)(n!)^2}{\AAC}.
\end{align*}\begin{align*}
\partial_s^2\partial_x^nX=-q\partial_x^n[(\partial_1\phi)(X,s)],\quad |\partial_x^nX(s)|\leq (1+s)\int_0^t|\partial_x^n[(\partial_1\phi)(X,s)]|\mathrm{d}s.
\end{align*}\begin{align*}
|\partial_x^n[(\partial_1\phi)(X,s)]|&\leq\sum_{*}\frac{n!}{m_1!\cdots m_n!}|(\partial_1^{k+1}\phi)(X,s)|\cdot\prod_{j=1}^n|\partial_x^jX/j!|^{m_j}\\
&\leq\sum_{*}\frac{n!}{m_1!\cdots m_n!}\|\partial_1^{k+1}\phi(s)\|_{L^{\infty}}\cdot\left|\frac{\gamma(s)}{\gamma(t)}\right|^{m_1}\cdot
\prod_{j=2}^n\left|\frac{\varphi_j(t)j!\gamma(s)}{\AAB\gamma(t)^{j}}\right|^{m_j}\\ &=\sum_{*}\frac{n!}{m_1!\cdots m_n!}\|\partial_1^{k+1}\phi(s)\|_{L^{\infty}}\frac{\gamma(s)^k}{\gamma(t)^n}\cdot
\prod_{j=2}^n\left|\frac{\varphi_j(t)j!}{\AAB}\right|^{m_j}.
\end{align*}\begin{align*}
\int_0^t|\partial_x^n[(\partial_1\phi)(X,s)]|\mathrm{d}s&\leq\sum_{*}\frac{n!}{m_1!\cdots m_n!}\frac{\varphi_k(t)(k!)^2}{\AAC\gamma(t)^n}\cdot
\prod_{j=2}^n\left|\frac{\varphi_j(t)j!}{\AAB}\right|^{m_j}\\ 
&\leq\sum_{*}\frac{(n!)^2\varphi_n(t)}{\AAC \gamma(t)^n}\cdot
\prod_{j=2}^n\frac{(j/n)^{(j-2)m_j/4}}{\ABB^{m_j}m_j!}\\ 
&\leq\sum_{m_2=0}^{+\infty}\cdots\sum_{m_n=0}^{+\infty}\frac{(n!)^2\varphi_n(t)}{\AAC\gamma(t)^n}\cdot
\prod_{j=2}^n\frac{(j/n)^{(j-2)m_j/4}}{\ABB^{m_j}m_j!}\\ 
&=\frac{(n!)^2\varphi_n(t)}{\AAC\gamma(t)^n}\cdot
\prod_{j=2}^n\exp\{(j/n)^{(j-2)/4}/\ABB\}\\ 
&\leq\frac{(n!)^2\varphi_n(t)}{\AAC\gamma(t)^n}\exp\{15/\ABB\}\leq\frac{(n!)^2\varphi_n(t)}{\ABC\gamma(t)^n}.
\end{align*}\begin{align*}
|\partial_x^nX(s)|\leq (1+s)\frac{(n!)^2\varphi_n(t)}{\ABC\gamma(t)^n}\leq \frac{20}{19}\gamma(s)\frac{(n!)^2\varphi_n(t)}{\ABC\gamma(t)^n}\leq 
\frac{(n!)^2\varphi_n(t)\gamma(s)}{\AAB\gamma(t)^n}.
\end{align*}\begin{align*}
&|\partial_xw_0(x,x_0,t)|\leq \frac{1}{\gamma(t)},\quad |\partial_x^nw_0(x,x_0,t)|\leq \frac{\varphi_n(t)(n!)^2}{\AAB\gamma(t)^{n}},\ n\geq2,\end{align*}\begin{align*}
\partial_s^2\partial_{x_0}X=-q(\partial_1^2\phi)(X,s)\partial_{x_0}X,\quad 0<\partial_{x_0}X(s;x,x_0,t)\leq (t-s)/\gamma(t).
\end{align*}\begin{align*}
\partial_s^2\partial_x^n\partial_{x_0}X=-q\partial_x^n[(\partial_1^2\phi)(X,s)\partial_{x_0}X],\quad |\partial_x^nX(s)|\leq (t-s)\int_0^t|\partial_x^n[(\partial_1^2\phi)(X,s)\partial_{x_0}X]|\frac{s+1}{t+1}\mathrm{d}s.
\end{align*}\begin{align*}
&\int_0^t \min\left\{2\varphi_{n}(s),\frac{\varphi_{n+2}(s)((n+1)(n+2))^2}{\gamma(s)^{2}}\right\}(t-s)\mathrm{d}s\leq 500t\varphi_{n}(t).
\end{align*}\begin{align*}
&\int_0^t \|\partial_x^{n+2}\phi(s)\|_{L^{\infty}}\gamma(s)^{n+1}(t-s)\mathrm{d}s\leq \frac{t\varphi_n(t)(n!)^2}{2}.
\end{align*}\begin{align*}
\int_0^t|\partial_x^n[(\partial_1^2\phi)(X,s)]|\gamma(s)(t-s)\mathrm{d}s&\leq\sum_{*}\frac{n!t\varphi_k(t)(k!)^2}{3m_1!\cdots m_n!}\frac{1}{\gamma(t)^n}\cdot
\prod_{j=2}^n\left|\frac{\varphi_j(t)j!}{\AAB}\right|^{m_j}\\
&\leq\frac{(n!)^2t\varphi_n(t)}{2.5\gamma(t)^n}.
\end{align*}\fi


\begin{thebibliography}{99}
\bibitem{BD}C. Bardos, P. Degond, {\it Global existence for the Vlasov–Poisson equation in 3 space variables with small initial data}, Ann.
 Inst. H. Poincaré Anal. Non Linéaire 2(2):101–118, 1985.
\bibitem{B}J. Bedrossian, {\it A brief introduction to the mathematics of Landau damping}, arXiv:2211.13707.
\bibitem{BMM1}J. Bedrossian, N. Masmoudi, C. Mouhot, {\it Landau damping: paraproducts and Gevrey regularity}, Ann.
 PDE, 2, pp. Art. 4, 71, 2016
\bibitem{BMM2}J. Bedrossian, N. Masmoudi, C. Mouhot, {\it Landau damping in finite regularity for unconfined systems with screened interactions}, 
    Comm. Pure Appl. Math., 71, 537–576, 2018.
\bibitem{BM} J. Bedrossian, N. Masmoudi, {\it Inviscid damping and the asymptotic stability of planar shear flows in the 2D Euler equations},
Publ. Math. Inst. Hautes \'{E}tudes Sci., 122, 195-300, 2015.
\bibitem{CHL}S.-H. Choi, S.-Y. Ha, H. Lee, {\it Dispersion estimates for the two-dimensional Vlasov-Yukawa
 system with small data}, J. Differential Equations 250(1):515–550, 2011.
\bibitem{D}X. Duan, {\it Sharp Decay Estimates for the Vlasov-Poisson and Vlasov-Yukawa Systems with Small Data}, Kinet. Relat. Models 
15(1):119–146, 2022.
\bibitem{GS1}    R. Glassey, W. Strauss. {\it Singularity formation in a collisionless plasma could occur only
 at high velocities}. Arch. Rational Mech. Anal., 92(1):59–90, 1986.
\bibitem{GS2}    R. Glassey, W. Strauss. {\it High velocity particles in a collisionless plasma}. Math. Methods
 Appl. Sci., 9(1):46–52, 1987.
\bibitem{GS3}   R. Glassey, W. Strauss. {\it Large velocities in the relativistic Vlasov-Maxwell equations}. J.
 Fac. Sci. Univ. Tokyo Sect. IA Math., 36(3):615–627, 1989.
\bibitem{HRV}H. Hwang, A. Rendall, J. Vel\'azquez, {\it Optimal gradient estimates and asymptotic behaviour for
 the Vlasov-Poisson system with small initial data}, Arch. Ration. Mech. Anal. 200(1):313–360, 2011.
\bibitem{IRW}M. Iacobelli, S. Rossi, K. Widmayer, {\it On the stability of vacuum in the screened Vlasov-Poisson equation}, arXiv:2410.17978.
\bibitem{IJ1} A. Ionescu, H. Jia, {\it Inviscid damping near the Couette flow in a channel}, Comm. Math. Phys., 374:2015-2096, 2020.
\bibitem{IJ2} A. Ionescu, H. Jia, {\it Nonlinear inviscid damping near monotonic shear flows}, Acta Math. 230:321–399, 2023.
\bibitem{IPWW}A. D. Ionescu, B. Pausader, X. Wang, K. Widmayer, {\it On the Asymptotic Behavior of Solutions to the
 Vlasov–Poisson System}, International Mathematics Research Notices. IMRN, 8865–8889, 2022.
\bibitem{IPWW2}A. D. Ionescu, B. Pausader, X. Wang, K. Widmayer, {\it Nonlinear Landau damping and wave operators in sharp Gevrey spaces},  arXiv:2405.04473.
\bibitem{MZ} N. Masmoudi, Z. Zhao, {\it Nonlinear inviscid damping for a class of monotone shear flows in finite
 channel}, Ann. of Math. (2) 199:1093–1175, 2024.
\bibitem{MV} C. Mouhot, C. Villani, {\it On Landau damping}, Acta Math., 207:29–201, 2011.
\bibitem{WY}D. Wei, S. Yang, {\it On the 3D relativistic Vlasov-Maxwell system with large Maxwell field}, Comm. Math. Phys. 383(3):2275–2307, 2021.
\bibitem{Y} H. Yukawa, {\it On the interaction of elementary particles}, Proc. Phys. Math. Soc. Japan 17:48–57, 1935.
\end{thebibliography}
\end{document}